\newcommand{\lvt}{\left|\kern-1.35pt\left|\kern-1.3pt\left|}
\newcommand{\rvt}{\right|\kern-1.3pt\right|\kern-1.35pt\right|}
\def\@tocline#1#2#3#4#5#6#7{\relax
  \ifnum #1>\c@tocdepth % then omit
  \else
    \par \addpenalty\@secpenalty\addvspace{#2}%
        \begingroup \hyphenpenalty\@M
    \@ifempty{#4}{%
      \@tempdima\csname r@tocindent\number#1\endcsname\relax
    }{%
      \@tempdima#4\relax
    }%
    \parindent\z@ \leftskip#3\relax \advance\leftskip\@tempdima\relax
    \rightskip\@pnumwidth plus4em \parfillskip-\@pnumwidth
    #5\leavevmode\hskip-\@tempdima #6\nobreak\relax
    \ifnum #1=0
    \hfil\hbox to\@pnumwidth{}
    \else
    \hfil\hbox to\@pnumwidth{\@tocpagenum{#7}}\fi
    \par
    \nobreak
    \endgroup
  \fi}
\newtheorem{thm}{Theorem}[section]
\newtheorem{lem}[thm]{Lemma}
\newtheorem{prop}[thm]{Proposition}
\newtheorem{defn}[thm]{Definition}
\theoremstyle{remark}
 \def\la{{\langle}}
 \def\ra{{\rangle}}
 \def\ve{{\varepsilon}}
\def\({\left(}
\def \){ \right)}
\def\[{\left[}
\def \]{ \right]}
 \def\d{\mathrm{d}}
 \def\sph{{\mathbb{S}^{d-1}}}
 \def\sE{{\mathsf E}}
 \def\sO{{\mathsf O}}
 \def\sP{{\mathsf P}}
 \def\sc{{\mathsf c}}
 \def\sd{{\mathsf d}}
 \def\sm{{\mathsf m}}
 \def\sw{{\mathsf w}}
 \def\a{{\alpha}}
 \def\b{{\beta}}
 \def\g{{\gamma}}
 \def\k{{\kappa}}
 \def\t{{\theta}}
 \def\l{{\lambda}}
 \def\s{\sigma}
 \def\la{{\langle}}
 \def\ra{{\rangle}}
 \def\ve{{\varepsilon}}
 \def\CH{{\mathcal H}}
 \def\CV{{\mathcal V}}
 \def\BB{{\mathbb B}}
 \def\RR{{\mathbb R}}
 \def\SS{{\mathbb S}}
 \def\VV{{\mathbb V}}
 \def\XX{{\mathbb X}}
      \def\proj{\operatorname{proj}}
\def\lla{\langle{\kern-2.5pt}\langle}      
\def\rra{\rangle{\kern-2.5pt}\rangle}
\newcommand{\wh}{\widehat}
\def\f{\frac}
\begin{document}
\title{Highly localized kernels on space of homogeneous type}  \author{Yuan~Xu}
\address{Department of Mathematics, University of Oregon, Eugene, 
OR 97403--1222, USA}
\email{yuan@uoregon.edu} 
\thanks{The author was partially supported by Simons Foundation Grant \#849676.}
\thanks{For the proceedings of the {\it Constructive Theory of Functions, Lozenets, Bulgaria, 2023}.}
\date{\today}  
\subjclass[2010]{41A10, 41A63, 42C10, 42C40}

\begin{abstract}  
Highly localized kernels based on orthogonal polynomials have been studied and utilized over several regular 
domains. Much of the results deduced via these kernels can be treated uniformly in the framework of localizable
spaces of homogeneous type. We clarify the concept of localness and provide a list of such localizable spaces. 
\end{abstract}

\maketitle
 
\section{Introduction}
\setcounter{equation}{0}

Highly localized kernels based on orthogonal polynomials have been instrumental in studying various problems
in approximation theory and computational analysis over regular domains in the past two decades. Such kernels 
have near-exponential decay off the diagonal and also serve as kernels for integral operators that are polynomials
of near-best approximation on the domain, which makes them ideal tools for studying several problems in 
approximation theory and harmonic analysis. They have been established for weighted function spaces on the 
interval, the unit sphere, the unit ball, and the simplex in the Euclidean spaces 
\cite{BD, KPX1, KPX2, IPX, IPX2, NPW1, NPW2, PX1, PX2} as well as on $\RR_+^d$ and $\RR^d$
 \cite{KPPX, PX3}, and more recently, on the conic domains \cite{X21, X23a}. 

In the literature, much of the analysis based on highly localized kernels is carried out domain by domain, relying on 
the geometry of each domain. Recently, in \cite{X21}, a framework was developed that provides a unified treatment
for all compact domains that are equipped with highly localized kernels. In this expository paper, we aim to clarify 
the concept of localization and explain the essence of the framework. 

The setup is on the space of homogeneous type, which is a measure space $(\Omega, \mu, \sd)$ on the set $\Omega$ 
with a positive measure $\mu$ and a metric $\sd$ such that all open balls 
$$
    B(x,r)=\{y \in \Omega: \sd(x,y) < r\}  
$$ 
are measurable and $\mu$ is a regular measure satisfying the doubling property
$$
   \mu (B(x,2r)) \le c \mu (B(x,r)), \qquad \forall x \in  \Omega, \quad \forall r >0,
$$ 
where $c$ is independent of $x$ and $r$. Such a measure $\mu$ is called a doubling measure. In our setup, the
domain $\Omega$ will be either an algebraic surface or a domain with a non-empty interior, and 
$\d \mu = \sw(x) \d x$ with $\sw$ being a nonnegative weight function defined on $\Omega$ that defines a 
well-established inner product
\begin{equation} \label{eq:ipd}
    \la f, g\ra_{\sw} = \int_{\Omega} f(x) g(x) \sw(x) \d x
\end{equation}
on the space of polynomials restricted to $\Omega$. We shall write the homogeneous space as $(\Omega, \sw, \sd)$
with $\sw$ being a doubling weight, and we write $\sw (E) = \int_E \d \sw$ for the subset $E \subset \Omega$. 

We need kernels defined in terms of orthogonal polynomials. Let $\CV_n^d(\sw)$ be the space of orthogonal 
polynomials of degree $n$ with respect to this inner product. Let $P_n(\sw; \cdot,\cdot)$ be the reproducing kernel 
of the space $\CV_n^d(\sw)$. The orthogonal projection operator $\proj_n: L^2(\Omega, \sw)\mapsto \CV_n^d(\sw)$
can be written as 
\begin{equation} \label{eq:proj}
  (\proj_n f)(x) =\int_\Omega P_n(\sw; x,y)f(y)\sw(y) \d y, \qquad f\in L^2(\Omega, \sw).
\end{equation}
Let $\wh a$ be a cut-off function, defined as a compactly supported nonnegative function in $C^\infty(\RR_+)$ such that 
$\wh a(t) = 1$ for $0 \le t \le 1$ and $\wh a(t) = 0$ for $t \ge 2$. Then our highly localized kernels are defined by 
\begin{equation} \label{eq:Ln-kernel} 
L_n(\sw;x,y) := \sum_{j=0}^\infty \wh a \Big(\frac{j}{n}\Big) P_j(\sw; x,y).
\end{equation}
The kernel is regarded as highly localized if it satisfies an upper-bound estimate that contains a factor 
$(1+  n \sd (x,y))^{-\k}$ for any given $\k > 0$, which means that the kernel decays faster than any polynomial 
power away from the diagonal $x=y$; see the next section for a precise definition.

We call $(\Omega,\sw,\sd)$ a {\it localizable} space of homogeneous type if it possesses highly localized 
kernels. For such a space, the framework in \cite{X21} allows us to establish several results in approximation
theory and harmonic analysis in weighted function spaces, such as $L^p(\Omega, \sw)$, that include a 
characterization of best approximation by polynomials and localized tight frames, among other results. In other words, 
several results can be regarded as properties of localizable space of homogeneous type. The definition of 
highly localized kernels in \cite{X21} requires three assertions that need to hold. As we shall show in the next section,
one of the assertions is redundant as it holds for every doubling weight. 

At the moment, the localizable spaces of homogenous type are known only for a few regular domains. This is
not surprising given the complexity of multivariate orthogonal polynomials and, even more so, the kernel
$P_n(\sw;  \cdot,\cdot)$. Most highly localized kernels are established when a closed-form formula for the 
reproducing kernel $P_n(\sw; \cdot, \cdot)$ is known, which we call an {\it addition formula}, taking a cure
from the addition formula for the spherical harmonics. The existence of such a formula is rare and reflects 
something fundamental about the localizable space of homogeneous type. We shall provide a list of examples
of localizable spaces of homogeneous type.  

The paper is organized as follows. In the next section, we provide a precise definition of highly localized 
kernels and, as a consequence, localizable space of homogeneous type, and a sketch of some results that 
can be established with the help of such kernels. In the third section, we provide a list of examples of known
localizable spaces. 

\section{Localized space of homogeneous type}
\setcounter{equation}{0} 

\subsection{Orthogonal structure and localized kernels}
Let $(\Omega,\sw, \sd)$ be a space of homogeneous type associated with the inner product $\la \cdot,\cdot\ra_\sw$
defined in \eqref{eq:ipd}. The dimension of the space $\CV_n^d(\sw)$ of orthogonal polynomials depends on the 
geometry of $\Omega \subset \RR^d$. For our purpose, we only consider two cases. The first case is when the interior 
of $\Omega$ is an open set, such as the unit ball or the simplex in $\RR^d$, for which 
\begin{equation} \label{eq:dimVn}
   \dim \CV_n^d(\Omega,\sw) = \binom{n+d-1}{n}, \quad n = 0, 1,2,\ldots.
\end{equation}
The second case is when the domain $\Omega$ is a quadratic algebraic surface of the form $\{x \in \RR^d: \phi(x) = 0\}$,
where $\phi$ is a quadratic polynomial, such as the unit sphere $\sph$ with $\phi(x) = 1-\|x\|^2$, for which 
\begin{equation} \label{eq:dimVnS}
  \dim \CV_n^d(\Omega,\sw) = \binom{n+d-2}{n} + \binom{n+d-3}{n-1}, \quad n = 1,2,3,\ldots. 
\end{equation}

We assume that $\sw$ is regular so that the orthogonal decomposition 
$$
   L^2(\Omega, \varpi) = \bigoplus_{n=0}^\infty \CV_n^d(\Omega,\sw): \qquad f = \sum_{n=0}^\infty \proj_n f
$$
holds, where $\proj_n$ is the orthogonal projection operator. This gives the Fourier orthogonal decomposition of $f$ since, 
if $P_{\nu,n}$ is an orthonormal basis of $\CV_n^d(\varpi)$, then 
$$
  \proj_n f = \sum_{1 \le \nu \le \dim \CV_n^d(\Omega, \sw)} \wh f_{\nu,n} P_{\nu,n}, \qquad \wh f_{\nu,n} := \la f, P_{\nu,n} \ra_\sw. 
$$
The reproducing kernel $P_n(\sw; \cdot,\cdot)$ of $\CV_n^d(\Omega,\sw)$ is the kernel of the operator $\proj_n$,
as shown in \eqref{eq:proj}, which can be written as 
$$
   P_n(\sw; x,y) =   \sum_{1\le \nu \le \dim \CV_n^d(\Omega, \sw)} P_{\nu,n}(x) P_{\nu,n}(y)
$$
in terms of any orthonormal basis of $\CV_n^d(\varpi)$. In the multidimensional setting, orthogonal polynomials are 
complicated and described by multiple indices even when they can be written down, and the kernel $P_n(\sw; \cdot,\cdot)$
is a sum of multiple layers of products of polynomials. Such a sum can hardly be useful for obtaining estimates 
of kernels like $L_n(\sw)$ defined in \eqref{eq:Ln-kernel}. 

In all cases when highly localized kernels are known, their desired upper bound is established with the help of 
an addition formula for the reproducing kernel. In most of the cases, the addition formula is of the form
\begin{equation} \label{eq:AF}
   P_n(\sw; x,y) = \int_{\RR^m} Z_n(\xi(x,y; u)) \d \mu(u)
\end{equation}
where $Z_n$ is a polynomial, often orthogonal polynomial, of one variable, $\xi: (x,y;z) \mapsto \RR$ for 
$x, y \in \RR^d$, and $\d \mu$ is a measure on $\RR^m$, which can degenerate to point evaluations. The 
simplest example is the unit sphere with orthogonal polynomials being spherical harmonics with respect to the 
surface measure, for which the closed-form formula is given by 
\begin{equation} \label{eq:AF_sphere}
   P_n(\sw;x,y) = \frac{n+\l}{\l} C_n^\l (\la \xi,\eta \ra) \quad \hbox{with} \quad \l = \frac{d-2}{2},
\end{equation}
where $C_n^\l$ is the Gegenbauer polynomial of degree $n$ (see Subsection \ref{sec:sphere} below). This 
formula is called the {\it addition formula} for spherical harmonics, a name that we adopt for closed-formula of 
reproducing kernels in all other cases. If an addition formula of the form \eqref{eq:AF} exists, then the kernel 
$L_n(\sw;\cdot,\cdot)$ defined in \eqref{eq:Ln-kernel} can be written as 
$$
    L_n(\sw; x,y) = \int_{\RR^m} L_n\big(\xi (x,y; \mu)\big) \d \mu(u), 
$$
where $L_n$ on the right-hand side is a function of one variable given by
$$
 L_n(t) =  \sum_{j=0}^\infty \wh a \Big(\frac{j}{n}\Big) Z_j(t), 
$$
which allows us to deduce an upper bound of the kernel $L_n(\sw; \cdot,\cdot)$ from that of $L_n$. 

We should emphasize that the addition formula is something special. If it is of the form \eqref{eq:AF}, then it
indicates that the reproducing kernel, hence the projection operator $\proj_n f$, has an implicit one-dimensional 
structure, which allows us to relate the study of the Fourier orthogonal series to that of the Fourier orthogonal series
in one variable. For example, it is well-known that a substantial portion of the Fourier series on the unit sphere 
is reduced to that of the Fourier-Gegenbauer series. The same holds for other regular domains and for any 
space of homogeneous type that possesses an addition formula (cf. \cite{X21, X22, X23a}). 

\subsection{Highly localized kernels} 

Let $(\Omega, \sw, \sd)$ be a space of homogeneous type. The existing examples of the highly localized
kernel suggests the following definition. 

\begin{defn}\label{def:Assertions}
The kernels $L_n(\sw; \cdot,\cdot)$, $n=1,2,\ldots$, are called highly localized if they satisfy the following 
assertions: 
\begin{enumerate}[\,  \bf 1]
\item[] \textbf{Assertion 1}. For $\k > 0$ and $x, y\in \Omega$, 
$$
   |L_n(\sw; x ,y)| \le c_\k \frac{1} {\sqrt{\sw\!\left(B(x,n^{-1})\right)} \sqrt{\sw\!\left(B(y,n^{-1})\right)}
      \left(1+n \sd(x,y)\right)^\k}.
$$
\medskip\noindent
\item[] 
\textbf{Assertion 2}.  For $0 < \delta \le \delta_0$ with some $\delta_0<1$ and $x_1 \in B(x_2, \frac{\delta}{n})$, 
$$
   \left|L_n(\sw; x_1,y) - L_n(\sw; x_2,y)\right| \le c_\k \frac{n \sd(x_1,x_2)} 
        {\sqrt{\sw\!\left(B(x_1,n^{-1})\right)} \sqrt{\sw\!\left(B(x_2,n^{-1})\right)} \left(1+n \sd(x_2,y)\right)^\k}.
$$
\end{enumerate}
\end{defn} 

This definition contains one less assertion in comparison with the definition given in \cite{X21}. The third 
assertion contained in \cite{X21} implies that the first two assertions are sharp. For example, it implies that 
\begin{equation}\label{eq:Ln-bdd}
   \int_{\Omega} \left|L_n (\sw;x,y) \right|^p \sw(y) \d \sm(y)
       \le c \left[ \sw\!\left(B\!\left(x,n^{-1}\right)\right)\right]^{1-p}, 
\end{equation}
where $\d \sm$ is the Lebesgue measure on $\Omega$, which is frequently needed when we utilize 
the highly localized kernels. The third assertion imposes a condition on the weight function $\sw$: 

\medskip
\textbf{\textit{Assertion 3}}. For sufficient large $\k >0$, there is a constant $c_\k > 0$ such that 
\begin{align*}
\int_{\Omega} \frac{ \sw(y)}{\sw\!\left(B(y,n^{-1})\right)
    \big(1 + n \sd(x,y) \big)^{\k}}    \d \sm(y) \le c_\k.
\end{align*}

This assertion is stated in \cite{X21} as part of the definition for $L_n(\sw)$ being a highly localized kernel. 
It is, however, redundant since it holds for all doubling weight. 

\begin{prop}
If $(\Omega, \sw, \sd)$ is a space of homogeneous type, then Assertion 3 holds. 
\end{prop}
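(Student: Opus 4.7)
The plan is to estimate the integral by a dyadic annular decomposition centered at $x$, converting the singular factor $1/\sw(B(y,n^{-1}))$ into a quantity comparable to $1/\sw(B(x,2^{j}/n))$ via iterated application of the doubling property, and then summing a geometric series.

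First I would set $A_0 = B(x,1/n)$ and $A_j = B(x,2^j/n)\setminus B(x,2^{j-1}/n)$ for $j\ge 1$, so that $\Omega = \bigcup_{j\ge 0} A_j$. On $A_0$ the factor $(1+n\sd(x,y))^\k$ is bounded below by $1$, while on $A_j$ with $j\ge 1$ it is bounded below by $2^{(j-1)\k}$. This reduces the problem to controlling, for each $j$,
$$
I_j := \int_{A_j} \frac{\sw(y)}{\sw(B(y,n^{-1}))}\,\d\sm(y).
$$

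The main step is to show $I_j \le C\, c_0^{\,j+1}$, where $c_0$ is the doubling constant for $\sw$. For $y \in A_j$, the triangle inequality gives $B(x,2^j/n) \subset B(y,2^{j+1}/n)$; applying the doubling property $j+1$ times yields
$$
\sw\bigl(B(x,2^j/n)\bigr) \le \sw\bigl(B(y,2^{j+1}/n)\bigr) \le c_0^{\,j+1}\, \sw\bigl(B(y,n^{-1})\bigr).
$$
Therefore $1/\sw(B(y,n^{-1})) \le c_0^{\,j+1}/\sw(B(x,2^j/n))$, and since $A_j \subset B(x,2^j/n)$, integrating $\sw(y)\,\d\sm(y) = \d\sw(y)$ over $A_j$ gives the desired bound $I_j \le c_0^{\,j+1}$.

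Putting the two estimates together,
$$
\int_\Omega \frac{\sw(y)\,\d\sm(y)}{\sw(B(y,n^{-1}))(1+n\sd(x,y))^\k}
 \le c_0 + \sum_{j\ge 1} c_0^{\,j+1}\, 2^{-(j-1)\k},
$$
and the series converges provided $\k > \log_2 c_0$. Thus Assertion 3 holds for all sufficiently large $\k$, with a constant depending only on $\k$ and the doubling constant of $\sw$. There is no serious obstacle here; the only delicate point is matching the exponent of $c_0$ produced by the iterated doubling with the decay rate $2^{-j\k}$, which dictates how large $\k$ must be chosen, and confirms that this third assertion is truly a consequence of the doubling property alone and not an independent condition on $\sw$.
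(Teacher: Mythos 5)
Your proof is correct and follows essentially the same strategy as the paper: decompose $\Omega$ into dyadic annuli $A_j$ centered at $x$, use the doubling property to compare $\sw\bigl(B(y,n^{-1})\bigr)$ with a ball centered at $x$, and sum the resulting geometric series for $\k$ large. The only real difference is in how doubling enters. The paper first cites the two-sided comparison $\bigl(1+n\sd(x,y)\bigr)^{-\a(\sw)} \le \sw\bigl(B(y,n^{-1})\bigr)/\sw\bigl(B(x,n^{-1})\bigr) \le \bigl(1+n\sd(x,y)\bigr)^{\a(\sw)}$ as a known consequence of doubling, uses it to pull $\sw\bigl(B(x,n^{-1})\bigr)^{-1}$ out of the integral, and then applies doubling a second time on the annulus to cancel that factor against $\sw\bigl(B(x,2^m n^{-1})\bigr)$. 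You instead derive the needed comparison directly from the triangle inequality ($B(x,2^j/n)\subset B(y,2^{j+1}/n)$ for $y\in A_j$) together with $j+1$ iterations of the doubling condition, and normalize by $\sw\bigl(B(x,2^j/n)\bigr)$ so that $\sw(A_j)$ cancels immediately. The net effect is the same geometric series $\sum_j c_0^{\,j+1} 2^{-(j-1)\k}$ and the same requirement $\k > \log_2 c_0$, but your version is more self-contained: it avoids invoking the auxiliary inequality as an external fact (that inequality is itself proved by exactly the triangle-plus-iterated-doubling argument you spell out). In short, same decomposition and same engine, with the doubling comparison unpacked inline rather than quoted as a lemma.
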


\begin{proof}
It is known that the doubling weight $\sw$ satisfies 
\begin{equation}\label{eq:w(x)/w(y)}
 \big(1+ n \sd(x,y) \big)^{-\a(\sw)} \le 
  \frac{  \sw\!\left(B\left(y,n^{-1}\right)\right)}{ \sw\!\left(B\left(x,n^{-1}\right)\right)}\le \big(1+ n \sd(x,y) \big)^{\a(\sw)},
\end{equation}
where $\a(\sw) > 0$ is a positive constant. Fix $n$ and define $\Omega_{0,n}(x) = B(x, n^{-1})$ and 
$$
\Omega_{m,n}(x) = \left \{y:  2^{m-1}n^{-1} \le \sd (x,y) \le 2^{m} n^{-1}\right\}, \qquad m=1,2,\ldots. 
$$ 
Then $\Omega = \cup_{m=0}^\infty \Omega_{m,n}(x)$. Let $J_n$ denote the left-hand side of the inequality in Assertion 3. 
Using \eqref{eq:w(x)/w(y)}, we obtain
\begin{align*}
  J_n \, & = \sum_{m=0}^\infty \int_{\Omega_{m,n}(x)} \frac{ \sw(y)}{\sw\!\left(B(y,n^{-1})\right) \big(1 + n \sd(x,y) \big)^{\k}} \d \sm(y) \\
 & \le \sum_{m=0}^\infty \frac1{\sw\!\left(B(x, n^{-1})\right)} \int_{\Omega_{m,n}(x)} 
      \frac{ \sw(y)}{\big(1 + n \sd(x,y) \big)^{\k - \a(w)}} \d \sm(y). 
\end{align*}
Since $\sd(x,y) \ge 2^{m-1} n^{-1}$ and $\Omega_{m,n}(x) = B(x,2^m n^{-1}) \setminus B(x,2^{m-1} n^{-1})$, it follows that
\begin{align*}
  J_n \le \sum_{m=0}^\infty \frac1{\sw\!\left(B(x, n^{-1})\right)} \frac{ \sw\!\left(B(x, 2^m n^{-1})\right)} {(2^{m-1})^{\k - \a(\sw)}}  
  \le \sum_{m=0}^\infty \frac{ (c_\sw)^m}{(1+2^{m-1})^{\k - \a(\sw)} } \le \frac{1}{ 2^{ \k - \a(\sw)} - c_\sw}
\end{align*}
where we have used the doubling condition of $\sw$, with $c_\sw$ denoting the smallest constant for the doubling condition, 
in the second inequality. Choosing $\k$ large shows that $J_n$ is bounded.
\end{proof}

\begin{defn}
A space $(\Omega, \sw, \sd)$ of homogeneous type is called localizable if it possesses highly localized kernels for 
one doubling weight $\varpi$ on $\Omega$.
\end{defn}

This definition is given in \cite{X21}, where several problems are studied in a localizable space of homogeneous type,
with few additional assumptions, and it provides a uniform treatment of analysis on several regular domains that
are derived domain by domain in the literature. As a consequence, as soon as we can show a space of homogeneous
type is localizable, a variety of results will follow right away. This is the motivation for the study in \cite{X21}, where the 
framework is developed and applied to new spaces over conic domains. 

It is worthwhile to emphasize that we need highly localized kernel only for {\it one} doubling weight $\varpi$ on 
$\Omega$ in order for $(\Omega, \sw, \sd)$ being localizable. Indeed, most of the results in \cite{X21, X23a} hold
for all doubling weight in a localizable space.

\subsection{Analysis in localizable space of homogeneous type}
The highly localized kernels have been an important tool for several problems in approximation theory and harmonic
analysis. Here we briefly outline a few of them to illustrate their usefulness. 

\subsubsection{Best approximation by polynomials}
Let $L_n(\sw; \cdot,\cdot)$ be the operator defined in \eqref{eq:Ln-kernel} via a cut-off function $\wh a$, which
is a resampling of the kernel $P_{2n}(\sw; \cdot,\cdot)$. We can define an integral operator that has 
$L_n\big(\varpi; \cdot,\cdot)$ as its kernel. For convenience, we denote the operator by $L_n(\varpi)*f$ in the 
notation of a pseudo convolution and define it as 
\begin{equation}\label{eq:Lnf}
   L_n(\sw)* f (x) =  \int_{\Omega} f(y) L_n\big(\sw; x, y) \sw(y)  \d \sm(y). 
\end{equation}
The definition of the cut-off function $\wh a$ shows that $L_n*f = f$, whenever $f$ is a polynomial of degree at most 
$n$, and $L_n*f $ is a polynomial of degree at most $2n$. If $L_n(\sw)$ were highly localized, then $L_n(\sw)*f$
should be a good approximation to the function $f$. This is indeed the case. Let $E_n(f)_{p,\sw}$ denote the 
error of the best polynomial approximation of degree at most $n$ to $f$, 
$$
  E_n(f)_{p,\sw} := \int_{P \in \Pi_n(\Omega)} \|f - P \|_{p,\sw}, \qquad 1 \le p \le \infty,
$$
where $\|\cdot\|_{p,\sw}$ denotes the norma of $L^p(\Omega, \sw)$ and $\Pi_n(\Omega)$ denote the space of
polynomials restricted on $\Omega$. Then $L_n(\sw)*f$ is a near-best polynomial 
of best approximation, as specified in the following theorem.

\begin{thm}
Let $(\Omega,\sw,\sd)$ be a localizable space of homogeneous type. Let $f \in L^p(\Omega, \sw)$, $1 \le p < \infty$ 
and $f \in C(\Omega)$ for $p =\infty$. Then the operator $L_n(\sw)*f$ is bounded in $L^p(\Omega, \sw)$ and 
$$
    \|f - L_n* f\|_{p,\sw} \le c_p  E_n(f)_{p, \sw}, \quad 1 \le p \le \infty.
$$
\end{thm}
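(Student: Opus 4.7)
The plan is a three-step near-best approximation argument. First I would verify that $L_n(\sw)*$ reproduces $\Pi_n(\Omega)$; second, I would establish that $L_n(\sw)*$ is bounded on $L^p(\Omega,\sw)$ with norm independent of $n$; third, I would combine these via the triangle inequality against a polynomial of near-best approximation.

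Step 1 is immediate from the construction: since $\wh a(j/n)=1$ for $0 \le j \le n$, and any $P \in \Pi_n(\Omega)$ decomposes as $P = \sum_{j=0}^n \proj_j P$, the definition \eqref{eq:Ln-kernel} together with \eqref{eq:proj} gives $L_n(\sw)*P = \sum_{j=0}^n \proj_j P = P$.

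Step 2 is the analytical heart and the main obstacle. I would apply Schur's test with the measure $\sw(y)\d\sm(y)$. The difficulty is that Assertion 1 is symmetric in its two arguments, so $1/\sqrt{\sw(B(x,n^{-1}))\sw(B(y,n^{-1}))}$ is not directly integrable against $\sw(y)\d\sm(y)$. To bridge the gap I would use the doubling transfer \eqref{eq:w(x)/w(y)} to absorb the ratio $\sqrt{\sw(B(x,n^{-1}))/\sw(B(y,n^{-1}))}$ into a factor $(1+n\sd(x,y))^{\a(\sw)/2}$, producing the one-sided bound
$$
  |L_n(\sw;x,y)| \le \frac{c_\k}{\sw(B(y,n^{-1}))(1+n\sd(x,y))^{\k-\a(\sw)/2}}
$$
for $\k$ as large as desired, together with the analogous bound after swapping $x$ and $y$. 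Integrating the first inequality against $\sw(y)\d\sm(y)$ and invoking the now-established Assertion 3 (applied with $\k-\a(\sw)/2$ in place of $\k$) delivers $\sup_x \int_\Omega |L_n(\sw;x,y)|\sw(y)\d\sm(y) \le C$, and symmetrically $\sup_y \int_\Omega |L_n(\sw;x,y)|\sw(x)\d\sm(x) \le C$. Schur's lemma, or equivalently Riesz--Thorin interpolation between $L^1$ and $L^\infty$, then yields $\|L_n(\sw)*\|_{p\to p} \le C$ uniformly in $n$ for every $1 \le p \le \infty$.

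With both ingredients in hand, let $P_n^* \in \Pi_n(\Omega)$ satisfy $\|f-P_n^*\|_{p,\sw} \le 2E_n(f)_{p,\sw}$ (or an attained minimizer when available). The reproducing property gives
$$
  f - L_n(\sw)*f = (f - P_n^*) - L_n(\sw)*(f - P_n^*),
$$
and the triangle inequality combined with the uniform boundedness from Step 2 produces $\|f - L_n(\sw)*f\|_{p,\sw} \le (1+C)\|f - P_n^*\|_{p,\sw} \le 2(1+C) E_n(f)_{p,\sw}$, which is the claim. The hard part is therefore confined to Step 2, specifically the conversion of Assertion 1 into a Schur-integrable kernel via the doubling transfer \eqref{eq:w(x)/w(y)} and the previously proved Assertion 3; the rest is bookkeeping.
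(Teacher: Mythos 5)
Your proof is correct and follows the standard near-best approximation argument that \cite{X21} uses and that the paper itself gestures at through \eqref{eq:Ln-bdd}: indeed the $p=1$ case of \eqref{eq:Ln-bdd}, $\int_\Omega |L_n(\sw;x,y)|\sw(y)\,\d\sm(y)\le c$, is precisely the Schur bound you derive from Assertion 1 together with the doubling transfer \eqref{eq:w(x)/w(y)} and Assertion 3. The reproduction of $\Pi_n(\Omega)$, the uniform $L^p$ operator bound via Schur's test (using the symmetry of $L_n(\sw;\cdot,\cdot)$ for the dual condition), and the triangle inequality against a near-minimizer are exactly the three steps of the paper's proof.
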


If $Z_n$ in the addition formula \eqref{eq:AF} is a normalized Jacobi polynomial given by
$$
          Z_n^{(\a,\b)}(t) = \frac{P_n^{(\a,\b)}(1) P_n^{(\a,\b)}(t)}{h_n^{(\a,\b)}},
$$
where $h_n^{(\a,\b)}$ is the $L^2$ norm of $P_n^{(\a,\b)}$, which holds for several domains as will be
seen in the next section, then we can define a translation operator $S_{\t, \sw}$ by 
  \begin{equation}\label{eq:Stheta}
    \proj_n (\sw; S_{\t, \sw} f) =\frac{P_n^{(\a,\b)}(\cos \t)}{P_n^{(\a,\b)}(1)} \proj_n(\sw; f),  \quad n = 0,1,2,\ldots.
\end{equation}
This is a bounded operator and satisfies, in particular, that
$$
  \|S_{\t, \varpi} f \|_{p,\varpi} \le \|f\|_{p, \varpi} \quad \hbox{and} \quad \lim_{\t\to 0} \|S_{\t, \varpi}  f - f\|_{\varpi,p} =0.
$$
for $f\in L^p(\varpi; \Omega)$ if $1 \le p \le \infty$, or $f \in C(\Omega)$ if $p =\infty$. These properties allow us to
define a modulus of smoothness in the localizable space of homogeneous type. For $r > 0$, we define the $r$-th 
difference operator 
$$
  \triangle_{\t,\varpi}^r = \left(I - S_{\t,\varpi}\right)^{r/2}  =\sum_{n=0}^\infty (-1)^n \binom{r/2}{n} (S_{\t,\varpi})^n,
$$
where $I$ denotes the identity operator, in the distribution sense. Then the modulus smoothness is defined by
\begin{equation}\label{moduli1}
\omega_r(f,t)_{p,\varpi}:= \sup_{0< \theta \le t} \left\|\triangle_{\t,\varpi}^r f\right\|_{p,\varpi}, \quad  0 < t <\pi,
\end{equation}
which satisfies the usual properties of the modulus of smoothness. Furthermore, it can be used to prove both
direct and inverse theorems in a characterization of the best approximation $E_n(f)_{p,\sw}$ by polynomials. 
We refer to \cite{X21} for details. 

\subsubsection{Positive cubature rules and Christoffel functions} 
Let $\sw$ be a doubling weight on $\Omega$. A cubature rule of degree $n$ for $\sw$ is a finite linear
combination of point evaluations that satisfies 
$$
  \int_\Omega f(x) \sw(x)  \d \sm(x) = \sum_{k=1}^N \l_k f(x_k), \qquad  \forall f \in \Pi_n(\Omega),
$$
where $\l_k \in \RR$ and $x_k \in \Omega$. The cubature rule is called positive if all $\l_k$ are positive. 
The positivity is important for numerical stability and for many applications. Positive cubature rules exist
if the set of its nodes is well separated as defined below. 

\begin{defn}\label{defn:separated-pts}
Let $\Xi$ be a discrete set in $\Omega$. 
\begin{enumerate} [  \quad (a)]
\item A finite collection of subsets $\{S_z: z \in \Xi\}$ is called a partition of $\Omega$ if $S_z^\circ\cap 
S_y^\circ  = \emptyset$ when $z \ne y$ and $\Omega = \bigcup_{z \in \Xi} S_z$. 
\item Let $\ve>0$. A discrete subset $\Xi$ of $\Omega$ is called $\ve$-separated if $\sd(x,y) \ge\ve$
for every two distinct points $x, y \in \Xi$. 
\item $\Xi$ is called maximal if there is a constant $c_d > 1$ such that 
\begin{equation*}%\label{eq:def-pts2}
  1 \le  \sum_{z\in \Xi} \chi_{B(z, \ve)}(x) \le c_d, \qquad \forall x \in \Omega,
\end{equation*}
where $\chi_E$ denotes the characteristic function of the set $E$.
\end{enumerate}
\end{defn} 

The item (c) implies $\Omega = \bigcup_{z \in \Xi} B(z,\ve)$. The positive cubature rules have been studied 
by many authors and in various domains. It takes the following form. 

\begin{thm}\label{thm:cubature}
Let $\sw$ be a doubling weight on $\Omega$. Let $\Xi$ 
be a maximum $\frac{\delta}{n}$-separated subset of $\Omega$. There is a $\delta_0 > 0$ 
such that for $0 < \delta < \delta_0$ there exist positive numbers $\l_z$, $z \in \Xi$, so that 
\begin{equation}\label{eq:CFgeneral}
    \int_{\Omega} f(x) \sw(x)  \d \sm(x) = \sum_{z \in \Xi }\l_z f(z), \qquad 
            \forall f \in \Pi_n(\Omega).
\end{equation}
Moreover, the weight $\l_z$ satisfies $\l_z \ge c_1 \sw\!\left(B(z, \tfrac{\delta}{n})\right)$. 
\end{thm}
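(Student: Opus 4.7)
The plan is to follow the classical two-step route: first establish a Marcinkiewicz--Zygmund (MZ) quadrature estimate with the naive weights $\sw(S_z)$, and then upgrade that approximate quadrature to an exact positive cubature through a finite-dimensional duality argument.

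First I would use Definition~\ref{defn:separated-pts}(c) together with the doubling property of $\sw$ to build a partition $\{S_z : z \in \Xi\}$ of $\Omega$ satisfying $z \in S_z$, $S_z \subseteq B(z, c\delta/n)$ for a universal $c$, and $\sw(S_z) \asymp \sw(B(z,\delta/n))$. A Voronoi-type construction (assign each point of $\Omega$ to its nearest element of $\Xi$, breaking ties arbitrarily) does the job, because maximality forces $\Omega = \bigcup_{z\in\Xi} B(z,\delta/n)$, and doubling then controls the measure of each cell.

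The heart of the proof is the MZ estimate: for every $f \in \Pi_n(\Omega)$,
$$\left|\int_\Omega f(x)\,\sw(x)\,\d\sm(x) - \sum_{z\in\Xi} \sw(S_z)\,f(z)\right| \le c\,\delta \int_\Omega |f(x)|\,\sw(x)\,\d\sm(x).$$
Since $\deg f \le n$ and $\wh a \equiv 1$ on $[0,1]$, the reproducing identity $f = L_n(\sw)*f$ holds. Writing the left-hand side as $\sum_z \int_{S_z}[f(x)-f(z)]\,\sw(x)\,\d\sm(x)$ and expanding each difference through the kernel, Assertion~2 of Definition~\ref{def:Assertions} supplies
$$|L_n(\sw;x,y)-L_n(\sw;z,y)| \le c_\k\,\frac{n\,\sd(x,z)}{\sqrt{\sw(B(x,n^{-1}))\,\sw(B(z,n^{-1}))}\,(1+n\,\sd(z,y))^\k}$$
for $x \in S_z$, where $n\,\sd(x,z) \le c\delta$. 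Integrating in $y$ by means of Assertion~3 (proved in the Proposition above), integrating in $x \in S_z$, and summing over $z$ with the doubling comparison $\sw(S_z)\asymp\sw(B(z,n^{-1}))$ at scale $\delta/n$, yields the MZ inequality.

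With MZ in hand, pass from approximate to exact positive quadrature by a duality argument on the finite-dimensional space $\Pi_n(\Omega)$. Seek weights of the form $\l_z = \sw(S_z)(1+g_z)$, reducing the problem to finding $g \in \RR^\Xi$ such that $\sum_z \sw(S_z)\,g_z\,f(z) = I(f)-\sum_z \sw(S_z)\,f(z)$ for every $f\in\Pi_n(\Omega)$, where $I(f) := \int_\Omega f\,\sw\,\d\sm$. The right-hand side is a linear functional on $\Pi_n(\Omega)$ of operator norm at most $c\delta$ relative to $L^1(\Omega,\sw)$; combined with the companion bound $\sum_z \sw(S_z)|f(z)| \le c\,\|f\|_{1,\sw}$, obtained along similar lines, a Hahn--Banach extension (equivalently, the minimum-norm solution of the resulting underdetermined system in the weighted norm $\max_z |g_z|$) produces $g$ with $\max_z |g_z| \le \tfrac{1}{2}$ once $\delta < \delta_0$ is chosen small enough. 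Consequently $\l_z \ge \tfrac{1}{2}\sw(S_z) \ge c_1 \sw(B(z,\delta/n))$, as claimed. The main obstacle is the MZ step: extracting a genuinely quantitative factor of $\delta$ (rather than an uncontrolled $o(1)$) rests on the fact that Assertion~2 is a true Lipschitz improvement of Assertion~1 at scale $n^{-1}$, and requires careful tracking of the doubling constant and of the decay exponent $\k$. Once that estimate is established, the duality producing positive weights is essentially finite-dimensional linear algebra.
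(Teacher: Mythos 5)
Your outline will produce a correct positive cubature rule on a \emph{localizable} space, and the second step (upgrading the Marcinkiewicz--Zygmund estimate to exact positive weights by a finite-dimensional duality argument, whether phrased as Hahn--Banach on $\ell^1(\Xi)$ or as the Farkas lemma the paper cites) is the standard one. However, there is a genuine mismatch with the statement as given: the theorem hypothesizes only that $\sw$ is a doubling weight on $\Omega$, and the paper's discussion immediately after it makes the point explicit --- neither the Marcinkiewicz--Zygmund inequality nor the cubature theorem relies on highly localized kernels, which is precisely why the result is ``a property of the space of homogeneous type'' rather than of a localizable space. Your proof of the MZ estimate invokes Assertion~2 of Definition~\ref{def:Assertions}, i.e.\ the Lipschitz bound for $L_n(\sw;\cdot,\cdot)$, and this is exactly the assumption the theorem is designed to avoid. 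So as written your argument proves a strictly weaker statement, valid only when $(\Omega,\sw,\sd)$ is localizable.

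The standard route, which the paper points to via \cite{MNW,NPW1} (and, for doubling weights, the Mastroianni--Totik technology cited as \cite{MT}), controls $\int_{S_z}|f(x)-f(z)|\,\sw(x)\,\d\sm(x)$ directly for polynomials $f\in\Pi_n(\Omega)$ using Bernstein/Markov-type and Remez-type inequalities relative to the metric $\sd$, together with the doubling property, rather than by re-expanding $f=L_n(\sw)\ast f$ and differencing the kernel. That gives the quantitative factor $c\delta$ purely from polynomial inequalities and doubling, and hence a cubature theorem that holds for every doubling weight. If you want to retain your kernel-based derivation, you should at least flag that you are assuming localizability, and note that this is a stronger hypothesis than the theorem requires; otherwise the proof does not deliver the generality the statement asserts. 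The Voronoi partition step and the duality step in your write-up are fine and match the standard treatment.
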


This theorem is first proved for the unit sphere \cite{MNW, NPW1}. The proof is standard by now and 
can be easily adapted to any space of homogeneous type. The existence of positive $\l_z$ in \eqref{eq:CFgeneral}
is based on the Farkas lemma and uses the Marcinkiewicz-Zygmund inequality. The proof latter
inequality does not rely on highly localized kernels, nor is the proof of the above theorem. Thus, 
these results are properties for the space of homogeneous type. With such generality, they have 
become a useful tool for the discretization of $L^p(\Omega,\sw)$ for fairly general $\Omega$ and $\sw$. 

In the case that the space is localizable, we can utilize the highly localized kernels to provide a matching 
upper bound for the coefficients $\l_z$ via the Christoffel function, which plays an important role in 
studying the tight polynomial frame in the next subsection. Let $\sw$ be a doubling weight on $\Omega$. 
The Christoffel function $\l_n(\sw;\cdot)$ is defined by 
\begin{align}\label{eq:ChristoffelF}
   \l_n(\sw;x): = \inf_{\substack{g(x) =1 \\ g \in \Pi_n(\Omega)}} \int_{\Omega} |g(x)|^2 \sw(x)  \d \sm(x).
\end{align} 
The function is of interest in its own right and has various applications. Recall that $P_n(\sw;\cdot,\cdot)$ 
denote the reproducing kernel of $\CV_n(\Omega,\sw)$. it is known \cite[Theorem 3.6.6.]{DX} that, for 
$n=0,1,2,\ldots$,  
\begin{align}\label{eq:ChristoffelF2} 
   \l_n(\sw;x) = \frac{1}{K_n(\sw; x,x)}, \qquad   K_n(\sw;x,y) := \sum_{k=0}^n P_k(\sw; x,y). 
\end{align}    

We need to obtain lower and upper bound for $\l_n(\sw;x)$. If $(\Omega,\sw, \sd)$ is localizable, then the 
lower bound follows immediately from the highly localized kernel $L_n(\sw)$, since 
$$
  K_n(\sw; x,x) \le L_n (\sw; x,x) \le c\, \sw\big( B(x,n^{-1}) \big) 
$$
where the second inequality follows from setting $y=x$ in Assertion 1, which is equivalent to 
$\l_n(\sw; x) \ge c  \sw\big( B(x,n^{-1}) \big)$ by \eqref{eq:ChristoffelF2}. For the upper bound, we can use 
\eqref{eq:ChristoffelF} by selecting a particular function $g(y)$ that is localized at $x$, which we choose as
a fast decaying polynomial. In one variable, fast decaying polynomials are studied in \cite{KT}, see also
 \cite{ST}, which leads to fast decaying polynomials on the unit sphere by the addition formula. For the
unit ball, such polynomials are constructed in \cite{PX2}. In the space of homogeneous type, we assume
that such polynomials exist by making the following assertion. 

\medskip
{\bf Assertion 4}.  {\it Let $\Omega$ be compact. For each $x \in \Omega$, there is a nonnegative 
polynomial $T_x$ of degree at most $n$ that satisfies 
\begin{enumerate}[   (1)]
\item $T_x(x) =1$, $T_x(y) \ge \delta > 0$ for $y \in B(x,\f 1 n)$ for some $\delta$ independent of $n$,
and, for each $\g > 1$,  
$$
     0 \le  T_x(y) \le c_\g (1+ n \sd(x,y))^{-\g}, \qquad y \in \Omega; 
$$
\item there is a polynomial $q_n$ such that $q_n(x) T_x(y)$ is a polynomial of degree at most $r n$,
for some positive integer $r$, in $x$-variable and $c_1 \le q_n(x) \le c_2$ for $x \in \Omega$ for some
positive numbers $c_1$ and $c_2$. 
\end{enumerate}
}
\medskip

This was formulated in \cite{X21} and used to establish the following \cite[Lemma 2.1.5]{X21}:

\begin{lem}\label{lem:Ass4Q}
Assume Assertion 4. Let $\a> 0$ be a positive number and let $x \in \Omega$ be fixed. For a doubling 
weight $\sw$ on $\Omega$, there is a polynomial $Q_x$ of degree $n$ such that, for all $y \in \Omega$, 
\begin{equation} \label{eq:Ass4Q}
    c_1 (1+n \sd(x,y))^{\a} \sw(B(y,\tfrac{1}{n}))\le Q_x(y) \le c_2 (1+n \sd(x,y))^{\a} \sw(B(y,\tfrac{1}{n})), 
\end{equation}
where $c_1$ and $c_2$ are positive constant independent of $n$, $x$ and $y$. 
\end{lem}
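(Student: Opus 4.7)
The plan is to build $Q_x$ as a weighted superposition of the fast-decaying polynomials supplied by Assertion~4. Fix a decay exponent $\g>0$ to be chosen later, let $T_z$ denote the polynomial of degree at most $n$ produced by Assertion~4 at the point $z\in\Omega$ with decay parameter $\g$, and let $\Xi\subset\Omega$ be a maximal $\tfrac{\delta'}{n}$-separated set for some fixed $\delta'\in(0,1)$, which is finite because $\Omega$ is compact. Define
$$
  Q_x(y) := \sum_{z\in \Xi} \bigl(1+n\sd(x,z)\bigr)^{\a}\, \sw\!\left(B(z,n^{-1})\right)\, T_z(y).
$$
As a finite sum of polynomials of degree at most $n$ in $y$, $Q_x$ lies in $\Pi_n(\Omega)$.

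For the lower bound in \eqref{eq:Ass4Q}, I would pick $z_0\in\Xi$ closest to $y$. By maximality of $\Xi$, $\sd(z_0,y)<\delta'/n<1/n$, so Assertion~4(1) yields $T_{z_0}(y)\ge\delta$. Since $n\sd(z_0,y)<1$, the triangle inequality gives $(1+n\sd(x,z_0))\asymp(1+n\sd(x,y))$, and the doubling property of $\sw$ gives $\sw(B(z_0,n^{-1}))\asymp\sw(B(y,n^{-1}))$. All terms in the sum defining $Q_x$ are nonnegative, so discarding everything but the $z_0$-term produces the required lower estimate.

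For the upper bound, Assertion~4(1) supplies $T_z(y)\le c_\g(1+n\sd(z,y))^{-\g}$, and the elementary inequality $1+n\sd(x,z)\le (1+n\sd(x,y))(1+n\sd(y,z))$ yields
$$
  Q_x(y) \le c\,(1+n\sd(x,y))^{\a} \sum_{z\in\Xi} \sw\!\left(B(z,n^{-1})\right) (1+n\sd(y,z))^{\a-\g}.
$$
Partition $\Xi$ into shells $\Xi_j=\{z\in\Xi:2^{j-1}/n\le\sd(z,y)<2^j/n\}$ for $j\ge 1$, with $\Xi_0=\{z\in\Xi:\sd(z,y)<1/n\}$. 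The balls $B(z,\tfrac{\delta'}{2n})$, $z\in\Xi$, are pairwise disjoint by separation, each satisfies $\sw(B(z,\tfrac{\delta'}{2n}))\asymp\sw(B(z,n^{-1}))$ by doubling, and those with $z\in\Xi_j$ sit inside $B(y,2^{j+1}/n)$; this gives $\sum_{z\in\Xi_j}\sw(B(z,n^{-1}))\le c\,\sw(B(y,2^{j+1}/n))\le c\,2^{j\beta}\,\sw(B(y,n^{-1}))$ for a constant $\beta$ depending only on $\sw$. Since $(1+n\sd(y,z))\asymp 2^j$ on $\Xi_j$, summing over $j$ produces a geometric series with ratio $2^{\a+\beta-\g}$, which converges for any $\g>\a+\beta$ and yields the claimed upper bound.

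The main obstacle is a clean choice of parameters: $\g$ must be taken large enough (larger than $\a+\beta$) so that the geometric series in the upper bound converges, and $\delta'<1$ must be small enough that the nearest-point argument in the lower bound places $z_0$ inside the ball on which Assertion~4(1) guarantees $T_{z_0}\ge\delta$. No other ingredient beyond Assertion~4(1) and the doubling of $\sw$ is needed; in particular, part~(2) of Assertion~4 is not invoked, because the lemma only requires $Q_x$ to be a polynomial in the variable $y$.
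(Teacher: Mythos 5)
Your proposal is correct and is the standard construction used in this setting: build $Q_x$ as a discretized superposition $\sum_{z\in\Xi}(1+n\sd(x,z))^\a\,\sw(B(z,n^{-1}))\,T_z$ of the fast-decaying polynomials from Assertion~4 over a maximal $\frac{\delta'}{n}$-separated set, then get the lower bound by keeping the single nearest node and the upper bound by a dyadic-shell estimate combined with the doubling inequality $\sw(B(y,2^{j+1}/n))\lesssim 2^{j\beta}\sw(B(y,n^{-1}))$. The paper does not reproduce a proof here (it cites \cite[Lemma~2.1.5]{X21}), but your argument is the one used there; your observation that part~(2) of Assertion~4 is not needed for this particular lemma — it is only required when polynomial dependence in the first variable is wanted, e.g.\ for Bernstein-type inequalities — is a correct and worthwhile clarification. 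One small point worth stating explicitly: the constants depend on the admissible choice of $\delta'\in(0,1)$, the maximality constant $c_d$ from Definition~\ref{defn:separated-pts}(c), the doubling constant of $\sw$, the constant $\delta$ in Assertion~4, and the chosen $\g>\a+\beta$, but none of these depend on $n$, $x$, or $y$, which is exactly what the lemma requires.
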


The inequality \eqref{eq:Ass4Q} shows that $Q_x$ is highly localized and decays near-exponentially 
away from $y=x$. Using this lemma, we obtain an upper bound of $\l_n(\sw)$ and, in particular, a matching
upper bound for $\l_z$ in Theorem \ref{thm:cubature}.

\begin{thm}\label{thm:cubature2}
Let $(\Omega,\sw,\sd)$ be a localizable space of homogeneous type and assume Assertion 4. Then 
$$
   \l_n(\sw; x) \sim \sw\big(B(y,\tfrac{1}{n})\big).
$$
Moreover, the coefficients $\l_z$ in the cubature rule \eqref{eq:CFgeneral} satisfies 
 $\l_z \sim \sw\!\left(B(z, \tfrac{\delta}{n})\right)$.
\end{thm}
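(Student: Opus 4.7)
The plan is to establish both equivalences by deploying the model polynomial $T_x$ produced by Assertion~4, entering once through the variational definition \eqref{eq:ChristoffelF} of the Christoffel function and once through the positive cubature rule. The lower bound $\l_n(\sw;x)\ge c\,\sw\!\left(B(x,n^{-1})\right)$ is already recorded just before Assertion~4: putting $y=x$ in Assertion~1 yields $K_n(\sw;x,x)\le L_n(\sw;x,x)\le c/\sw(B(x,n^{-1}))$, and \eqref{eq:ChristoffelF2} inverts this to the desired bound. Since the cubature lower bound $\l_z\ge c_1\,\sw\!\left(B(z,\tfrac{\delta}{n})\right)$ is already in Theorem~\ref{thm:cubature}, the two remaining tasks are both upper bounds, and they will share a common integral estimate as their core.

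For the matching upper bound on $\l_n(\sw;x)$, I would feed $T_x\in\Pi_n(\Omega)$ into \eqref{eq:ChristoffelF} and apply the decay in Assertion~4(1):
\[
\l_n(\sw;x)\le \int_{\Omega}T_x(y)^2\,\sw(y)\,\d\sm(y)\le c_\g \int_{\Omega}\frac{\sw(y)}{(1+n\sd(x,y))^{2\g}}\,\d\sm(y).
\]
To bound the last integral by $C\,\sw(B(x,n^{-1}))$, I would rewrite the integrand as $\sw(y)/\sw(B(y,n^{-1}))\cdot\sw(B(y,n^{-1}))/(1+n\sd(x,y))^{2\g}$, pull out the factor $\sw(B(x,n^{-1}))(1+n\sd(x,y))^{\a(\sw)}$ via the doubling inequality \eqref{eq:w(x)/w(y)}, and then apply Assertion~3 to the residual integrand with exponent $\k=2\g-\a(\sw)$; choosing $\g$ large in Assertion~4(1) makes this exponent as large as required.

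For the cubature estimate, I would apply Assertion~4 with $n$ replaced by $\lfloor n/2\rfloor$ to produce a nonnegative $T_z\in\Pi_{\lfloor n/2\rfloor}(\Omega)$ with $T_z(z)=1$, so that $T_z^2$ lies in $\Pi_n(\Omega)$ and is admissible in \eqref{eq:CFgeneral}. Since every $\l_y$ is positive,
\[
\l_z=\l_z\,T_z(z)^2\le\sum_{y\in\Xi}\l_y\,T_z(y)^2=\int_{\Omega}T_z(y)^2\,\sw(y)\,\d\sm(y)\le C\,\sw\!\left(B(z,n^{-1})\right)
\]
by the same integral estimate (applied with $\lfloor n/2\rfloor$ in place of $n$, which only shifts constants through doubling), and a further use of doubling makes this comparable to $\sw\!\left(B(z,\tfrac{\delta}{n})\right)$ for any fixed $\delta$. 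The main technical obstacle throughout is the integral estimate itself: the decay supplied by Assertion~4 only controls the unweighted integrand, so one must pass through \eqref{eq:w(x)/w(y)} to exchange $\sw(B(y,n^{-1}))$ for $\sw(B(x,n^{-1}))$, incurring a polynomial factor $(1+n\sd(x,y))^{\a(\sw)}$ that has to be absorbed by taking $\g$ large enough; once this bookkeeping is in place the summation over dyadic annuli proceeds exactly as in the proof of Assertion~3.
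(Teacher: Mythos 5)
Your proof is correct and follows the route the survey intends: the lower bound on $\l_n(\sw;x)$ comes from Assertion~1 via \eqref{eq:ChristoffelF2}, the lower bound on $\l_z$ is already in Theorem~\ref{thm:cubature}, and both upper bounds come from inserting the fast decreasing polynomial of Assertion~4 into the variational formula \eqref{eq:ChristoffelF} and into the positive cubature rule, then absorbing the weight through \eqref{eq:w(x)/w(y)} and Assertion~3. The integral estimate you set up is sound, and choosing $\g$ large against $\a(\sw)$ is exactly the right way to close it. The paper itself (being a survey) does not write out the argument, pointing instead to \cite{X21}; the phrase ``via the Christoffel function'' for the $\l_z$ upper bound matches your scheme of applying Assertion~4 at degree $\lfloor n/2\rfloor$ and squaring.

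One small simplification worth noting: since Assertion~4 already guarantees $T_z\ge 0$ on all of $\Omega$, you may take $T_z\in\Pi_n(\Omega)$ of full degree and run
\[
\l_z=\l_z\,T_z(z)\le\sum_{y\in\Xi}\l_y\,T_z(y)=\int_{\Omega}T_z(y)\,\sw(y)\,\d\sm(y)\le C\,\sw\!\left(B(z,n^{-1})\right),
\]
without squaring or halving the degree; positivity of the $\l_y$ and of $T_z$ is all that is used to drop the off-diagonal terms, and the integral estimate applies verbatim (with $2\g$ replaced by $\g$). This avoids the cosmetic degree-halving step, though your version is also valid. Also, the statement of the theorem has an obvious typo ($B(y,\tfrac1n)$ should read $B(x,\tfrac1n)$), which you silently correct by working with $\sw(B(x,n^{-1}))$ throughout.
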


It is shown in \cite{X21, X23a} that this theorem holds for conic domains.  

\subsubsection{Localized polynomial tight frames}
The study of such tight frames starts with the unit sphere in \cite{NPW1}, where they are called {\it needlelits},
and then studied for several regular domains, such as (cf. \cite{BD, DaiX, KPPX, IPX, PX1, PX2, PX3}), and they
have been applied in both computational analysis (cf. \cite{BKMP1, BKMP2, KP1, KP2, LSWW, WLSW}) and in
decomposition of functions spaces (cf. \cite{KPX1, KPX2, IPX2, NPW2, PX3}). We follow the framework
in \cite{X21}. 

Let $(\Omega, \varpi, \sd)$ be a localizable homogeneous space. We consider a highly localized kernel
$L_n(\sw;x,y)$ with its cut-off function $\wh a$ satisfies 
\begin{align*}% \label{eq:a-frame}
\begin{split} 
 \wh a(t)\ge \rho > 0, \qquad & \mbox{if $t \in [3/5, 5/3]$},\\
 [\wh a(t)]^2 + [\wh a(2t)]^2 =1, \qquad & \mbox{if $t \in [1/2, 1]$.}
\end{split}
\end{align*}
Such a cut-off function can be easily constructed. %If $g$ is a nonnegative even function in $C^\infty(\RR)$, so that $\mathrm{supp}\, g = [-1, 1]$, $g(0) =1$ and 
%$|g(t)|^2 + |g(t+1)|^2 =1$ on  $[-1, 0]$, then $\wh a(t) := g(\log_2t)$ has the desired properties. 
The last assumption on $\wh a$ implies 
\begin{equation}\label{eq:a3}
\sum_{j=0}^\infty  \left[ \wh a\left( \frac{t}{2^{j} } \right) \right]^2 = 1, \quad t \in [1, \infty).
\end{equation}
In terms of the kernel $L_n(\sw;\cdot,\cdot)$ defined via the above cut-off function $\wh a$, we define 
$$
  F_0(x,z):=1, \quad \hbox{and}\quad F_j(x,z): = L_{2^{j-1}}(\sw; x, z), \quad j = 1, 2, 3, \ldots. 
$$
and define, accordingly, $F_j * f = L_{2^{j-1}}(\sw)* f$, that is, 
$$
   F_j * f (x): = \int_\Omega f(y) F_j(x,y) \sw(y)  \d \sm(y), \qquad j = 0,1,2,\ldots. 
$$
The $*$ operation is associative. By the orthogonality of the reproducing kernel, it follows readily that
\begin{align*}
F_j*F_j * f = \sum_{k=1}^{2^{j}} \left|\wh a\left (\frac{k}{2^{j-1}}\right)\right|^2 \proj_k(\sw; f).
\end{align*}
Hence, the following semi-discrete Calder\'{o}n type decomposition follows from \eqref{eq:a3}, 
\begin{equation}\label{eq:f=F*F*f}
   f  =  \sum_{k=0}^\infty \proj_k(\sw;f) = \sum_{j=0}^\infty F_j* F_j * f, \qquad f\in L^2(\Omega,\sw). 
\end{equation}

Since $F_j(x,\cdot)F_k(\cdot,y)$ is a polynomial of degree $2^j$, discretization by a cubature rule of degree $2^j$ 
puts $F_j*F_k$ as a sum, which leads to a tight frame if the cubature rule has positive coefficients. We use the
positive cubature rule in Theorem \ref{thm:cubature}. For $j =0,1,\ldots,$ let $\ve_j = \frac \delta {2^{j}}$ and
let $\Xi_j$ be a maximal $\ve_j$-separated subset in $\Omega$, where $\delta$ is chosen so that the cubature rule 
\eqref{eq:CFgeneral} holds. Thus, there are $\l_{z,j} > 0$ for $z \in \Xi_j$ such that 
\begin{equation*}%\label{eq:cuba-frame}
  \int_\Omega f(x) \sw(x)  \d \sm(x) = \sum_{z \in \Xi_j} \l_{z,j} f(z), \qquad f \in \Pi_{2^j} (\Omega); 
\end{equation*}
moreover, $\l_{z,j} \sim \sw(B(z, 2^{-j}))$ since we assume that Assertion 4 holds for $\sw$. We use this cubature 
to discretize $F_j*F_k$, which leads to a tight frame. 
 
For $z \in \Xi_j$ and $j = 1, 2,\ldots$, we define our frame elements by 
$$ 
      \psi_{z,j}(x):= \sqrt{\l_{z,j}} F_j(x, z). 
$$ 
Then $\Psi:= \big\{\psi_{z,j}: z \in \Xi_j, \quad 1 \le j \le \infty\big\}$ is a frame system \cite[Theorem 2.20]{X21}:

\begin{thm}\label{thm:frame}
Assume that $\Omega$ admits a localizable homogenous space. Let $\sw$ be a doubling weight 
satisfying Assertion 4. If $f\in L^2(\Omega, \sw)$, then
\begin{equation} \label{eq:f=frame}
   f =\sum_{j=0}^\infty \sum_{z \in\Xi_j}
            \langle f, \psi_{z, j} \rangle_\sw \psi_{z,j}  \qquad\mbox{in $L^2(\Omega, \sw)$}
\end{equation}
and
\begin{equation} \label{eq:tight-frame}
\|f\|_{2, \sw}  = \Big(\sum_{j=0}^\infty \sum_{z \in \Xi_j} |\langle f, \psi_{z,j} \rangle_\sw|^2\Big)^{1/2}.
\end{equation}
\end{thm}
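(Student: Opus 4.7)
The plan is to combine the semi-discrete Calder\'on decomposition \eqref{eq:f=F*F*f} with the positive cubature rule of Theorem \ref{thm:cubature} (or, more precisely, the sharpened version in Theorem \ref{thm:cubature2}, which gives $\l_{z,j}\sim \sw(B(z,2^{-j}))$) to discretize each summand of \eqref{eq:f=F*F*f}. First I would write, for each $j\ge 0$,
$$
   F_j * F_j * f (x) \,=\, \int_\Omega F_j(x, y)\, (F_j * f)(y)\, \sw(y) \,\d \sm(y).
$$
Since the cut-off function $\wh a$ is supported in $[0,2]$, the kernel $F_j(x,\cdot) = L_{2^{j-1}}(\sw;x,\cdot)$ lies in $\Pi_{2^{j}-1}(\Omega)$, and so does $F_j*f$; hence for fixed $x$, the integrand is a polynomial in $y$ of degree less than $2^{j+1}$. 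Choosing the separation parameter in the definition of $\Xi_j$ with a built-in safety factor so that Theorem \ref{thm:cubature} applies at this degree, the cubature rule \eqref{eq:CFgeneral} evaluates the integral exactly, yielding
$$
  F_j * F_j * f(x) = \sum_{z\in \Xi_j} \l_{z,j}\, F_j(x,z)\,(F_j*f)(z).
$$

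Next I would use the symmetry $F_j(x,y)=F_j(y,x)$ (which is inherited from the symmetry of the reproducing kernels $P_k(\sw;\cdot,\cdot)$) to rewrite $(F_j*f)(z) = \la f, F_j(z,\cdot)\ra_\sw = \l_{z,j}^{-1/2}\la f,\psi_{z,j}\ra_\sw$, and combine with $\psi_{z,j}(x)=\sqrt{\l_{z,j}}\,F_j(x,z)$ to obtain
$$
  F_j * F_j * f(x) = \sum_{z\in \Xi_j} \la f,\psi_{z,j}\ra_\sw\, \psi_{z,j}(x).
$$
Summing over $j$ and invoking \eqref{eq:f=F*F*f} then proves the reconstruction formula \eqref{eq:f=frame}, with convergence in $L^2(\Omega,\sw)$.

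For the Parseval identity \eqref{eq:tight-frame}, I would take the $L^2(\Omega,\sw)$ inner product of \eqref{eq:f=F*F*f} with $f$. Since each $F_j$ has a real symmetric kernel, the operator $F_j*$ is self-adjoint on $L^2(\Omega,\sw)$, so $\la F_j*F_j*f,f\ra_\sw = \|F_j*f\|_{2,\sw}^2$, and the orthogonality of distinct spaces $\CV_k^d(\sw)$ gives $\|f\|_{2,\sw}^2 = \sum_{j=0}^\infty \|F_j*f\|_{2,\sw}^2$. Next I would discretize $\|F_j*f\|_{2,\sw}^2 = \int_\Omega |F_j*f(y)|^2 \sw(y)\,\d\sm(y)$ by the same cubature rule on $\Xi_j$; the integrand is a polynomial of degree less than $2^{j+1}$, so the rule is exact and yields $\|F_j*f\|_{2,\sw}^2 = \sum_{z\in\Xi_j}\l_{z,j}|(F_j*f)(z)|^2 = \sum_{z\in\Xi_j}|\la f,\psi_{z,j}\ra_\sw|^2$. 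Summing over $j$ gives \eqref{eq:tight-frame}.

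The main obstacle, in my view, is the degree-bookkeeping: one has to verify that the cubature rule supplied by Theorem \ref{thm:cubature} on the $\frac{\delta}{2^{j}}$-separated set $\Xi_j$ is in fact exact on polynomials of degree up to $2^{j+1}$, which forces a careful choice of the scale $\ve_j$ (with a constant safety factor) and hence a choice of $\delta_0$ in Theorem \ref{thm:cubature} valid at every dyadic level. A secondary, more routine point is justifying the $L^2$ convergence of the double sum in \eqref{eq:f=frame}; this follows from the Parseval identity together with dominated convergence, so the two pieces of the theorem reinforce each other and the whole argument closes.
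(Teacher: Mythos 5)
Your argument is correct and is essentially the same as the paper's (which delegates to \cite[Theorem 2.20]{X21}): apply the cubature rule of Theorem~\ref{thm:cubature} to the $y$-integral defining $F_j*F_j*f$, use the symmetry of $F_j$ to recognize $\sqrt{\l_{z,j}}(F_j*f)(z)=\la f,\psi_{z,j}\ra_\sw$, and obtain \eqref{eq:tight-frame} by the same discretization of $\|F_j*f\|_{2,\sw}^2$ together with \eqref{eq:a3}. You are also right to flag the degree bookkeeping as the only real care point: since $\wh a$ is supported in $[0,2]$, $F_j(x,\cdot)=L_{2^{j-1}}(\sw;x,\cdot)$ has degree at most $2^j-1$, so the integrand in the cubature step has degree up to $2^{j+1}-2$, which is why $\ve_j$ must carry a constant safety factor (and why the resulting $\l_{z,j}\sim\sw(B(z,2^{-j}))$ is unaffected, by doubling).
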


The frame element $\psi_{z,j}$ has a near exponential rate of decay away from its center with respect to the 
distance $\sd(\cdot, \cdot)$ on $\Omega$. 

\begin{thm}\label{p:localization}
Let $(\Omega,\varpi, \sd)$ be a localizable space of homogeneous type. There, there is a constant $c_\k >0$ 
depending only on $\k$, $d$, $\varpi$ and $\wh a$ such that for $z \in \Xi_j$, $j=0, 1, \dots$,  
\begin{equation} \label{est.needl}
   |\psi_{z,j}(x)| \le c_\k \frac{1}{\sqrt{\varpi(B(z, 2^{-j}))} (1+ 2^j \sd(x,z))^\k}, \quad x\in \Omega.
\end{equation}
\end{thm}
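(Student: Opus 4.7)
The plan is to unwind the definition $\psi_{z,j}(x) = \sqrt{\lambda_{z,j}}\,F_j(x,z)$ and reduce the bound to an application of Assertion 1, inserting the doubling estimate \eqref{eq:w(x)/w(y)} as a bridge between the balls centered at $x$ and those centered at $z$.

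First I would dispose of $j=0$: here $F_0 \equiv 1$, and since $\Omega$ is compact we have $1+\sd(x,z) \le 1+\mathrm{diam}(\Omega) \le C$, while $\lambda_{z,0} \lesssim \sw(\Omega) \sim \sw(B(z,1)) \sim \sw(B(z,2^0))$ by doubling, so the estimate follows. For $j\ge 1$, by Theorem~\ref{thm:cubature2} (which uses Assertion~4, assumed in the setting of Theorem~\ref{thm:frame}) we have $\lambda_{z,j} \sim \sw(B(z,2^{-j}))$. Writing $n = 2^{j-1}$, Assertion~1 applied to $L_n(\sw; x, z)$ gives, for any $\kappa'>0$,
\[
|F_j(x,z)| \le \frac{c_{\kappa'}}{\sqrt{\sw(B(x, 2^{-(j-1)}))}\,\sqrt{\sw(B(z, 2^{-(j-1)}))}\,(1+2^{j-1}\sd(x,z))^{\kappa'}}.
\]

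Next I would clean up the two square roots in the denominator. Doubling immediately yields $\sw(B(z, 2^{-(j-1)})) \sim \sw(B(z, 2^{-j}))$, which cancels against $\sqrt{\lambda_{z,j}}$ up to a multiplicative constant. For the factor involving $x$, I invoke \eqref{eq:w(x)/w(y)} with $n = 2^{j-1}$:
\[
\frac{1}{\sw(B(x,2^{-(j-1)}))} \le \frac{(1 + 2^{j-1}\sd(x,z))^{\alpha(\sw)}}{\sw(B(z,2^{-(j-1)}))} \sim \frac{(1 + 2^{j-1}\sd(x,z))^{\alpha(\sw)}}{\sw(B(z,2^{-j}))}.
\]
Taking the square root and combining the three estimates gives
\[
|\psi_{z,j}(x)| \le \frac{c_{\kappa'}}{\sqrt{\sw(B(z,2^{-j}))}\,(1+2^{j-1}\sd(x,z))^{\kappa' - \alpha(\sw)/2}}.
\]

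Finally, since Assertion~1 holds with any $\kappa'>0$, I pick $\kappa' = \kappa + \alpha(\sw)/2$; the quantity $1+2^{j-1}\sd(x,z)$ and $1+2^{j}\sd(x,z)$ are comparable within a factor of $2$, so I can replace $2^{j-1}$ by $2^j$ in the decay factor at the cost of enlarging the constant. This yields the claimed bound \eqref{est.needl}. There is no genuine obstacle here: the argument is essentially bookkeeping, and the only subtle step is noticing that the mismatch between the $x$-centered and $z$-centered ball volumes costs only a polynomial factor in $\sd(x,z)$, which is harmlessly absorbed by taking $\kappa$ larger in the highly localized estimate for $L_n$.
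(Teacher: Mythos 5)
Your argument is correct and is exactly the standard one used to prove this estimate (the paper itself gives no proof here, citing \cite{X21}, but the argument there is the same): unwind $\psi_{z,j}=\sqrt{\lambda_{z,j}}\,L_{2^{j-1}}(\varpi;\cdot,z)$, apply Assertion 1 with $n=2^{j-1}$, use $\lambda_{z,j}\sim\varpi(B(z,2^{-j}))\sim\varpi(B(z,2^{-(j-1)}))$ from Theorem~\ref{thm:cubature2} and doubling, and then trade $\varpi(B(x,2^{-(j-1)}))$ for $\varpi(B(z,2^{-(j-1)}))$ at the price of a polynomial factor in $1+2^{j-1}\sd(x,z)$ via \eqref{eq:w(x)/w(y)}, absorbed by taking $\kappa'=\kappa+\a(\varpi)/2$. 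You also correctly flag that the comparability $\lambda_{z,j}\sim\varpi(B(z,2^{-j}))$ (rather than merely the lower bound from Theorem~\ref{thm:cubature}) requires Assertion~4, which is part of the standing hypotheses of the frame construction even though it is not restated in the theorem; and the $j=0$ case is correctly disposed of by compactness.
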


Besides the regular domains mentioned before, these localized tight frames hold on conic domains as
shown in \cite{X21}. 

\section{Examples of localizable space of homogeneous type}
\setcounter{equation}{0}
We state several examples of $(\Omega,\sw, \d)$ that are localizable spaces of homogeneous type. In each 
case, we also include the addition formula utilized to prove Assertions 1 and 2. 

\subsection{The interval $[-1,1]$ with the Jacobi weight} 
For $\a, \b > -1$, the Jacobi weight function is defined by 
$$
      w_{\a,\b}(t):=(1-t)^\a(1+t)^\b, \qquad -1 < x <1. 
$$
%Its normalization constant $c'_{\a,\b}$, defined by $c'_{\a,\b}  \int_{-1}^1 w_{\a,\b} (x)dx = 1$, is given by
%\begin{equation}\label{eq:c_ab}
% c'_{\a,\b} = \frac{1}{2^{\a+\b+1}} c_{\a,\b} \quad\hbox{with} \quad 
%   c_{\a,\b} := \frac{\Gamma(\a+\b+2)}{\Gamma(\a+1)\Gamma(\b+1)}.
%\end{equation}
The intrinsic distance of the interval $[-1,1]$ is defined by 
\begin{equation*}\label{eq:dist[-1,1]}  
\sd_{[-1,1]}(t,s) = \arccos \left(t s + \sqrt{1-t^2} \sqrt{1-s^2}\right),
\end{equation*}
which is the projection of the distance $|\t-\phi|$ on the upper half of the unit circle if we set $t  =\cos \t$ 
and $s = \cos \phi$. The space $([-1,1], w_{\a,\b}, \sd_{[-1,1]})$ is a localizable homogeneous space if
$\a, \b \ge -\f12$. Let $P_n^{(\a,\b)}$ be the usual Jacobi polynomials. Then the kernel $L_n$, denoted 
by $L_n^{(\a,\b)}$, is of the form 
\begin{equation}\label{def.L}
L_n^{(\a,\b)}(x,y)=\sum_{j=0}^\infty \wh a \Big(\frac{j}{n}\Big)
       \frac{P_j^{(\a,\b)}(x) P_j^{(\a,\b)}(y)} {h_j^{(\a,\b)}},
\end{equation}
where $\wh a$ is an admissible cutoff function and $h_j^{(\a,\b)}$ is the norm square of $P_j^{(\\a,\b)}$ in
the $L^2([-1,1], w_{\a,\b})$. Let the ball $B(x,r)$ be defined with respect to this distance. Then the Jacobi weight 
is a doubling weight and satisfies, 
$$
   w_{\a, \b}(B(x, n^{-1})) \sim n^{-1} (1-t + n^{-2})^{\a+\f12} (1+t + n^{-2})^{\b+\f12} =: w_{\a,\b}(n; t).
$$
The first Assertion of the highly localized kernel is established in \cite{PX1}, which states: for any $\k >0$ and $n \ge 1$,
\begin{equation*}%\label{eq:JacobiBd1}
|L_n^{(\a,\b)} (t,s)| \le c_\k \frac{1}{\sqrt{w_{\a,\b}(n; t)}\sqrt{w_{\a,\b}(n; s)}}
     \left(1+n \sd_{[-1,1]}(t, s)\right)^{- \k}, 
\end{equation*}
where $\alpha, \beta \ge -1/2$ and $0<\ve \le 1$. The second assertion is stated in \cite[Theorem 2.2]{KPX1}.

In this one-dimensional case, there is no addition formula but a product formula that writes the product of two
Jacobi polynomials of the same parameters as an integral of one Jacobi polynomial of the same parameters,
which allows us to write $L_n^{(\a,\b)}(x,y)$ as an integral of $L_n^{(\a,\b)}(z,1)$.  

\subsection{Unit sphere}\label{sec:sphere}
For the unit sphere $\sph$ of $\RR^d$, we consider the space $(\sph, \d \s, \sd_\SS)$, where $\d\s$ is the 
surface measure and $\sd_\SS$ is the geodesic distance defined by
$$
  \sd_\SS (\xi,\eta) = \arccos \la \xi,\eta\ra, \qquad \xi, \eta \in \sph.
$$ 
The orthogonal polynomials are the classical spherical harmonics, which are homogeneous harmonic polynomials restricted 
to the unit sphere. The space $\CV_n^d(\sph, \d\s)$ is usually denote by $\CH_n^d$ and its dimension is given by 
\eqref{eq:dimVnS}. Let $\{Y_{\nu,n}: 1 \le \nu \le \dim \CH_n^d\}$ be an orthonormal basis of $\CH_n^d$. The classical
addition formula for the spherical harmonics gives a closed-form formula for the reproducing kernel of $\CH_n^d$, 
$$
 \sP_n (\xi,\eta):= \sum_{1 \le \nu \le \dim \CH_n^d} Y_{\nu,n}(\xi) Y_{\nu,n}(\eta) =
  Z_n^{\frac{d-2}2}  (\la \xi,\eta\ra), \quad \forall \xi,\eta\in \sph,
$$
where we use $Z_n^\l$ to denote a multiple of the Gagenbauer polynomials $C_n^\l$, 
$$
  Z_n^\l(t) : = \frac{n+\l}{\l} C_n^\l (t), \qquad \l \ge 0,
$$
where the case $\l = 0$ holds under the limit so that $Z_n^0(t) = T_n(t)$, the Chebyshev polynomial of the first kind. 
The addition formula reduces the estimate of the highly localized kernel to the kernel of the Jacobi polynomials 
$P_n^{(\mu,\mu)}(1,z)$ with $\mu = \frac{d-3}{2}$. In this case, the Assertion 1 is of a particularly simple form
$$
   \left| L_n(\xi,\eta) \right | \le c_\k \frac{n^{d-1}}{(1+ n \sd_\SS (\xi,\eta))^{\k}},
$$
which is also the first highly localized kernel known in the literature (\cite{NPW1}).

The space $\CH_n^d$ has another important property worth mentioning: it is the eigenspace of the Laplace-Beltrami 
operator $\Delta_0$, which is the restriction of the Laplace operator $\Delta$ on the unit sphere; see, for example,
\cite[Section 1.4]{DaiX} for its explicit expression. More precisely, it is known \cite[(1.4.9)]{DaiX} that 
\begin{equation} \label{eq:sph-harmonics}
     \Delta_0 Y = -n(n+d-2) Y, \qquad Y \in \CH_n^d.
\end{equation}
We shall call $\Delta_0$ the spectral operator. The two properties, the addition formula and the spectral operator, are 
fundamental for approximation and harmonic analysis on the unit sphere. 

\subsection{Unit ball}
For the unit ball $\BB^d$ of $\RR^d$, we consider the space $(\BB^d, \sw_\mu, \sd_\BB)$, where $\sw_\mu$ is the 
weight function defined by 
$$
  \sw_\mu(x) = (1- \|x\|^2)^{\mu-\f12}, \qquad \mu > -\f12, \quad x \in \BB^d,
$$ 
and $\sd_\BB$ is the intrinsic distance on $\BB^d$ defined by 
$$
  \sd_\BB(x,y) = \arccos \Big(\la x, y \ra + u \sqrt{1-\|x\|^2}\sqrt{1-\|y\|^2} \Big), \quad x,y \in \BB^d. 
$$
Several orthogonal basis of the space $\CV_n^d(\BB^d, \sw_\mu)$ can be given explicitly. The 
reproducing kernel $P_n(\sw_\mu; \cdot,\cdot)$ satisfies an addition formula \cite{X99}: 
for $\mu \ge 0$ and $x,y \in \BB^d$,
\begin{align*}% \label{eq:additionBall}
      P_n(\sw_\mu;x,y) = c_{\mu-\f12} 
         \int_{-1}^1 Z_n^{\mu+\f{d-1}{2}} & \Big(\la x, y \ra + u \sqrt{1-\|x\|^2}\sqrt{1-\|y\|^2} \Big) (1-u^2)^{\mu-1}\d u, \notag
\end{align*}
where $\mu > 0$ and it holds for $\mu =0$ under the limit
\begin{equation}\label{eq:limitInt}
   \lim_{\mu \to 0+}  c_{\mu-\f12} \int_{-1}^1 f(t) (1-t^2)^{\mu-1} \d u = \frac{f(1) + f(-1)}{2}. 
\end{equation}

It is known that $\sw_\mu$ is a doubling weight on the unit ball and 
$$
   \sw_\mu\big( B(x,n^{-1})\big) \sim n^{-d} \left(\sqrt{1-\|x\|^2} + n^{-1} \right)^{2\mu} =:\sw_\mu(n;x).
$$
In this case, the Assertion 1 takes the form 
$$
  \left| L_n (\sw_\mu; x,y)\right| \le c_k \frac{1}{\sqrt{\sw_\mu(n;x)} \sqrt{\sw_\mu(n,y)} (1+n \sd_\BB(x,u))^\k}.
$$ 
This inequality and Assertion 2 are established in \cite{PX2}. Consequently, the space $(\BB^d, \sw, \sd)$ is a 
localizable space of homogeneous type.

\subsection{Simplex} 
For the simplex $\triangle^d$ of $\RR^d$ defined by
$$
  \triangle^d := \left\{x: x_1,\ge 0, \ldots, x_d \ge 0, \, |x| \le 1\right\}, \qquad |x|: = x_1+\ldots + x_d,
$$
we consider the space $(\triangle, \sw_\g, \sd_\triangle)$, where $\sw_\g$ is the classical Jacobi weight function 
$$
   \sw_\g(x) = x_1^{\g_1} \cdots x_d^{\g_d} (1-|x|)^{\g_{d+1}}, \qquad \g_i > -1, \quad x \in \triangle^d,
$$
and $\sd_\triangle$ is the intrinsic distance on $\triangle^d$ defined by 
$$
  \sd_\triangle (x,y) = \arccos \Big(\sqrt{x_1 y_1} + \cdots + \sqrt{x_d y_d} + \sqrt{1-|x|}\sqrt{1-|y|} \Big). 
$$
Like the case of the unit ball, several orthogonal bases of the space $\CV_n^d(\triangle^d, \sw_\g)$ can be given explicitly. 
The reproducing kernel $\sP_n(\sw_\g; \cdot,\cdot)$ also satisfies an addition formula \cite[Theorem 5.2.4]{DX}:  
for $\a_i \ge -\f12$ and $x,y \in \triangle^d$,  
\begin{equation}\label{eq:add-formula}
   P_n(\sw_\g; x,y) = c_\a \int_{-1}^1 Z_{n}^{(|\g| + d-\f12, -\f12)} \left(2 \xi(x,y; t)^2 -1\right) \prod_{i=1}^{d+1} (1-t_i^2)^{\g_i - \f12} \d t,
\end{equation}
where $Z_n^{(\a,\b)}$ is a constant multiple of the Jacobi polynomial
$$
  Z_n^{(\a,\b)}(t) = \frac{P_n^{(\a,\b)}(1)P_n^{(\a,\b)}(t)}{h_n^{(\a,\b)}}
$$
and $\xi(x,y,t)$ is a function defined by 
$$
  \xi(x,y; t) = \sqrt{x_1 y_1} t_1 + \cdots +\sqrt{x_d y_d} t_d + \sqrt{x_{d+1} y_{d+1}} t_{d+1}.
$$
In this case, $\sw_\g$ is known to be a doubling weight and it satisfies, for $\g_i \ge 0$, $1\le i \le d+1$, 
$$
    \sw_\g (B(x,n^{-1})) \sim n^{-d}  \prod_{i=1}^{d+1} (\sqrt{x_i}+n^{-1})^{2 \g_i+1}, \quad x\in \triangle^d,
$$
which follows, for example, from \cite[Lemma 11.3.6]{DaiX} by making a change of variables $x_j \mapsto \sqrt{x_j}$. 
In this case, Assertion 1 was established in \cite{IPX} and Assertion 2 appeared more recently in \cite{GX} for 
$\sw_\g$. Thus, the space $(\triangle^d, \sw, \sd_\triangle)$ is a localizable space of homogeneous type.  

\subsection{Conic surface} 
For the conic surface $\VV_0^{d+1}$ of $\RR^{d+1}$ defined by
$$
  \VV_0^{d+1} := \left\{(x,t): \|x\| = t, \, x \in \RR^d \, 0 \le t \le 1\right\}
$$
we consider the space $(\VV_0^{d+1}, \sw_{-1,\g}, \sd_{\VV_0})$, where the weight function $\sw_{-1,\g}$ is the
Jacobi weight function with one parameter equal to $-1$, 
$$
  \sw_{-1,\g}(t) = t^{-1} (1-t)^{\g}, \qquad \g > -1,
$$
and $\sd_{\VV_0}$ in the intrinsic distance function on the conic surface \cite[Definition 4.1]{X21},
\begin{equation*}%\label{eq:distV0}
  \sd_{\VV_0} ((x,t), (y,s)): =  \arccos \left(\sqrt{\frac{\la x,y\ra + t s}{2}} + \sqrt{1-t}\sqrt{1-s}\right).
\end{equation*}
Orthogonal polynomials in this setting were first studied in \cite{X20a}, where it is shown, in particular, that 
the reproducing kernel $P_n(\sw_{-1,\g}; \cdot,\cdot)$ for the space $\CV_n(\VV^{d+1}, \sw_{-1,\g})$ satisfies 
the addition formula 
\begin{align}\label{eq:Ln-intV0}
P_n (\sw_{-1,\g}; (x,t), (y,s) )=  
    c_{\g}  \int_{[-1,1]^2} & Z_n ^{(\g + d -\f32,-\f12)}\big(2 \zeta (x,t,y,s; v)^2-1 \big)\\
  &  \times    (1-v_1^2)^{\f{d-2}2-1}(1-v_2^2)^{\g-\f12} \d v \notag
\end{align}
for $\g > -\f12$ and under the limit when $\g = -\f12$. The weight $\sw_{-1,\g}$ is an integrable doubling weight with 
respect to $\sd_{\VV_0}$ and it satisfies \cite[(4.4)]{X21}
\begin{align*} 
 \sw_{\b,\g}\big(\sc((x,t), n^{-1})\big) \sim  r^d (t+ n^{-2} )^{\f{d-2}{2}} (1-t+ n^{-2})^{\g+\f12}. 
\end{align*}
Both Assertions 1 and 2 are established in \cite{X21} with the help of the addition formula \eqref{eq:Ln-intV0}
for $\sw_{-1,\g}$, where the work of carrying out the estimates is fairly involved. Consequently, the space 
$(\VV_0^{d+1}, \sw, \sd_{\VV_0})$ is a localizable space of homogeneous type.  

\subsection{Solid cone}
For the solid cone $\VV^{d+1}$ of $\RR^{d+1}$ bounded by $\VV_0^{d+1}$ and the hyperplane $t=1$, 
$$
  \VV^{d+1} := \left\{(x,t): \|x\| \le t, \, x \in \RR^d \, 0 \le t \le 1\right\}
$$
we consider the weight function $\sw_{\g,\mu}$ defined by 
$$
  \sw_{\g,\mu}(x,t) = (t^2- \|x\|^2)^{\mu-\f12} (1-t)^{\g}, \qquad \b > -\f12, \quad \g > -1,
$$
and $\sd_{\VV}$ is the intrinsic distance function on the conic surface \cite[Definition 4.1]{X21},
\begin{equation*}%\label{eq:distV0}
  \sd_{\VV} ((x,t), (y,s)): =  \arccos \left(\sqrt{\frac{\la x,y\ra + \sqrt{1-\|x\|^2}\sqrt{1-\|y\|^2}  + t s}{2}} + \sqrt{1-t}\sqrt{1-s}\right).
\end{equation*}
Orthogonal polynomials in this setting were first studied in \cite{X20a}, where it is shown, in particular, that 
the reproducing kernel $P_n(\sw_{\g,\mu}; \cdot,\cdot)$ for the space $\CV_n(\VV^{d+1}, \sw_{\g,\mu})$ satisfies 
the addition formula 
\begin{align}\label{eq:Ln-intV}
P_n \left(\sw_{\g,\mu}; (x,t), (y,s)\right)=  
    c  & \int_{[-1,1]^3}  Z_n ^{2\mu+\g+d} \big(2 \zeta (x,t,y,s; u, v) \big)\\
  &  \times  (1-u^2)^{\mu-1}  (1-v_1^2)^{\mu+\f{d-3}2}(1-v_2^2)^{\g-\f12} \d u \d v. \notag
\end{align}
where $\zeta(x,t,y,s;u,v) \in [-1,1]$ is defined by 
$$
  \zeta(x,t,y,s;t) = v_1\sqrt{\f12 \left(t s + \la x,y\ra + \sqrt{t^2-\|x\|^2}\sqrt{s^2-\|y\|^2} u\right)} + v_1 \sqrt{1-t}\sqrt{1-s}.
$$
The $\sw_{\g,\mu}$ is an integrable doubling weight with respect to $\sd_{\VV}$ and it satisfies \cite[(4.4)]{X21}
\begin{align*} 
 \sw_{\g,\mu}\big(B((x,t), n^{-1})\big) \sim  n^{-d+1}  (t+ n^{-2} )^{\f{d-1}{2}} (1-t+ n^{-2})^{\g+\f12} (t^2-\|x\|^2+n^{-2})^\mu. 
\end{align*}
Both Assertions 1 and 2 are established in \cite{X21}, so that the space $(\VV^{d+1}, \sw, \sd_{\VV})$ is a 
localizable space of homogeneous type. 

\subsection{Double conic and hyperbolic surface}
For $\varrho \ge 0$, we consider the quadratic surface defined by 
$$
  \XX_0^{d+1} =  \left \{(x,t): \|x\|^2 = t^2 - \varrho^2, \, x \in \RR^d, \, \varrho \le |t| \le \sqrt{\varrho^2 +1}\right\}, 
$$
which is a hyperbolic surface if $\varrho >0$ and a double conic surface if $\varrho =0$. We consider
the space $(\XX_0,\sw_{\b,\g}, \sd_{\XX_0})$, where $\sw_{\b,\g}$ is the weight function defined by,
for $d \ge 2$, $\b > -\f12$ and $\g > -\f12$, 
\begin{equation*}%\label{eq:sf6weight}
   \sw_{\b,\g}(t) = 
      |t| (t^2-\varrho^2)^{\b-\f12}(\varrho^2+1 - t^2)^{\g-\f12},  \qquad  \varrho \le |t| \le \sqrt{\varrho^2 +1},
\end{equation*}
and $\sd_{\XX_0}$ is the distance function on $\XX_0^{d+1}$ given by 
\begin{align*}
 \sd_{\XX_0}((x,t), (y,s)) = \arccos \left (\la x,y\ra + \sqrt{1+\varrho^2-t^2}\sqrt{1+\varrho^2-s^2} \right).
%         & = \sd_{\XX_0} \left( \Big(x, \sqrt{t^2-\varrho^2}\Big), \Big(y,\sqrt{s^2 - \varrho^2}\Big)\right). \notag
\end{align*}
When $\varrho = 0$, or on the double conic surface, the weight function becomes 
$$
    \sw_{\b,\g}(t)= |t|^{2\b}(1-t^2)^{\g-\f12}. 
$$
The weight function $\sw_{\b,\g}$ on the hyperbolic surface is a doubling weight and satisfies 
$$
  \sw_{\b,\g} \big(B((x,t),n^{-1}) \big) = 
       n^{-d} \big(t ^2-\varrho^2 + n^{-2}\big)^{\b} \big(1 -t^2 + \varrho^2+ n^{-2}\big)^{\g}. 
$$

The orthogonal polynomials in this setting are first studied in \cite{X20b}. The space of orthogonal
polynomials $\CV_n(\XX_0^{d+1}, \sw)$ of degree $n$ has the same dimension as the space of 
spherical harmonics $\CH_n^{d+1}$. If the weight function $\sw$ is even in the $t$ variable, this space 
can be factored as 
$$
  \CV_n\left(\XX_0^{d+1}, \sw\right) =  \CV_n^\sE\big(\XX_0^{d+1}, \sw\big) \bigoplus 
      \CV_n^\sO\big(\XX_0^{d+1}, \sw\big),
$$
where the subspace $\CV_n^\sE(\XX_0^{d+1}, \sw)$ consists of orthogonal polynomials that are even in 
the $t$ variable, and the subspace $\CV_n^\sO(\XX_0^{d+1}, \sw)$ consists of orthogonal polynomials 
that are odd in the $t$ variable. Then 
\begin{equation*}%\label{eq:dimVnE}
  \dim \CV_n^\sE\big(\XX_0^{d+1},\sw\big) =\binom{n+d-1}{n} \quad \hbox{and} \quad  
    \dim \CV_n^\sO\big(\XX_0^{d+1},\sw\big) =\binom{n+d-2}{n-1}. 
\end{equation*}

It turns out that a closed-form formula holds for $P_n^\sE(\sw_{\b,\g})$ and it has the simplest form when $\b = 0$, in which case
the addition formula is given by  
\cite[(5.11)]{X20b}
\begin{align} \label{eq:sfPEadd0Hyp}
    P_n^\sE \big (\sw_{0,\g}; (x,t),(y,s) \big)& = c_{\g-\f12}  \int_{-1}^1   (1-v^2)^{\g-1}  \\
& \times Z_n^{\g+\frac{d-1}{2}}
 \bigg( \la x,y\ra + v \sqrt{1+\varrho^2-s^2} \sqrt{1+ \varrho^2 -t^2}\bigg) \d v. \notag
\end{align}
The kernel $P_n^\sO(\sw_{\b,\g})$ is related to $P_n^\sE(\sw_\b,\g)$ by a simple relation \cite[(5.3)]{X20b}
\begin{align} \label{eq:sfPOadd}
  P_n^\sO \big (\sw_{\b,\g}; (x,t),(y,s) \big) = \frac{\b+\g+\frac{d+1}{2}}{\b+\f{d+1}2} s t \,P_{n-1}^\sE \big (\sw_{\b+1,\g}; (x,t),(y,s) \big)
\end{align}
for $\b, \g > -\f12$ and $\b \ge - \f12$ under the limit when $\varrho = 0$, which shows that
$P_n^\sO \big (\sw_{-1,\g}; \cdot,\cdot \big)$ has a simple closed formula in the form of 
\eqref{eq:sfPEadd0Hyp} when $\varrho = 0$ but with a different weigh function $\sw_{-1,\g}$ in place of $\sw_{0,\g}$. 

In order to use the addition formula, we need to consider the subspace of homogeneous type $(\XX_0^{d+1}, \sw, \sd)$ 
defined according to the parity. Thus, instead of the kernel $L_n(\sw)$, we consider 
$$
  L_n^\sE\big(\sw; (x,t), (y,s)\big) = \sum_{j=0}^\infty \wh a\left( \frac{j}{n} \right) P_j^\sE \big(\sw; (x,t), (y,s)\big) 
$$
and the kernel $L_n^\sO(\sw)$ defined similarly, which uses only polynomials that are even (odd respectively) 
in the $t$ variable. In this case, we can decompose the surface $\XX_0^{d+1}$ as an upper part and a lower part, 
$$
    \XX_0^{d+1}  = \XX_{0,+}^{d+1} \cup \XX_{0,-}^{d+1} =  \{(x,t) \in \XX_0^{d+1}: t \ge 0\} \cup
      \{(x,t) \in \XX_0^{d+1}: t \le 0\}. 
$$ 
If a function $f(x,t)$ is even in the $t$ variable, then we only need to consider its restriction on $\XX_{0,+}^{d+1}$. 
In particular, if we consider, for example, approximation and tight polynomial frames for functions that are even in the
$t$ variable, then the framework established in \cite{X21} applies. 

The above setup can also be regarded as studying a function $f$ defined on $\XX_{0,+}^{d+1} = \VV_0^{d+1}$ and
extends its definition to $\XX_0^{d+1}$ by defining $f(x,t) = f(x,-t)$. It should be noted, however, that the polynomial 
space $\Pi_n(\VV_0^{d+1})$ of polynomials on $\VV_0^{d+1}$ and the $\Pi_n(\XX_{0,+})^{d+1}$ are not the same
and, in fact, have different dimensions. 

Using $L_n^\sE(\sw)$, both Assertions 1 and 2 are established in \cite{X23a}. Thus, if $\sw$ is a doubling weight 
even in the $t$ variable, then $(\XX_{0,+}^{d+1}, \sw, \sd_{\XX_0})$ is a localizable space of homogeneous type. 

\subsection{Double cone and hyperboloid}
For $\varrho \ge 0$, we consider the domain defined by 
$$
  \XX^{d+1} =  \left \{(x,t): \|x\|^2 \le t^2 - \varrho^2, \, x \in \RR^d, \, \varrho \le |t| \le \sqrt{\varrho^2 +1}\right\}, 
$$
which is the hyperboloid, bounded by the hyperbolic surface $\XX^{d+1}$ and two hyperplanes $t  = \pm 1$, if
$\varrho >0$ and a solid double cone if $\varrho =0$. We consider the space $(\XX^{d+1},\sw_{\b,\g,\mu}, \sd_{\XX})$,
where $\sw_{\b,\g,\mu}$ is the weight function defined by 
\begin{equation*}%\label{eq:sf6weight}
   \sw_{\b,\g,\mu}(x,t) = |t| (t^2-\varrho^2)^{\b-\f12} (1+\varrho^2-t^2)^{\g-\f12}(t^2-\varrho^2-\|x\|^2)^{\mu - \f12}
\end{equation*}
and $\sd_{\XX}$ is the distance function on $\XX^{d+1}$ given by  
\begin{align*}
 \sd_{\XX}((x,t), (y,s)) = \arccos \left (\la x,y\ra +\sqrt{1-\|x\|^2}\sqrt{1-\|y\|^2}
    +\sqrt{1+\varrho^2-t^2}\sqrt{1+\varrho^2-s^2} \right).
%         & = \sd_{\XX_0} \left( \Big(x, \sqrt{t^2-\varrho^2}\Big), \Big(y,\sqrt{s^2 - \varrho^2}\Big)\right). \notag
\end{align*}
The weight function $\sw_{\b,\g,\mu}$ on the hyperboloid is a doubling weight and satisfies 
$$
  \sw_{\b,\g,\mu} \big(B((x,t),n^{-1}) \big) = 
       n^{-d-1}   \big(t^2-\varrho^2 + n^{-2})^\b (1+\varrho^2 - t^2+n^{-2}\big)^{\g} \big(t^2-\varrho^2 -\|x\|^2+n^{-2}\big)^{\mu}.
$$

As in the case of hyperbolic surface, if the weight function $\sw$ is even in the $t$ variable, then the space of 
orthogonal polynomials $\CV_n(\XX^{d+1}, \sw)$ can be factored as
$$
  \CV_n\left(\XX^{d+1}, \sw \right) =  \CV_n^\sE\big(\XX^{d+1}, \sw\big) \bigoplus 
      \CV_n^\sO\big(\XX^{d+1}, \sw\big),
$$
where the subspace $\CV_n^\sE(\XX^{d+1}, \sw)$ consists of orthogonal polynomials that are even in 
the $t$ variable, and the subspace $\CV_n^\sO(\XX^{d+1}, \sw)$ consists of orthogonal polynomials 
that are odd in the $t$ variable. A closed-form formula holds for $P_n^\sE(\sw_{\b, \g,\mu})$, which takes the
simplest form if $\b = \f12$ \cite[Corollary 5.6]{X20b},
\begin{align*} %\label{eq:sfPEadd0Hyp}
    P_n^\sE \big (\sw_{\f12, \g,\mu}; (x,t),(y,s) \big)& = c \int_{-1}^1 \int_{-1}^1 
    Z_n^{\g+\mu+\frac{d}{2}} \big(\zeta(x,t,y,s; u,v)\big) (1-v^2)^{\g-1} (1-u^2)^{\mu-1} \d u \d v,
\end{align*}
where 
\begin{align*} 
\zeta(x,t,y,s; u,v) = \Big( \la x,y\ra \,& +  u \sqrt{t^2 - \varrho^2-\|x\|^2}\sqrt{s^2- \varrho^2 -\|y\|^2} \Big)
    \mathrm{sign}(st)  \\
         & + v \sqrt{1+ \varrho^2 -s^2}\sqrt{1+ \varrho^2- t^2}.
\end{align*}
Moreover, the kernel $P_n^\sO(\sw_{\b,\g,\mu})$ is related to $P_n^\sE(\sw_{\b+1,\g,\mu})$ by a formula 
that is similar to \eqref{eq:sfPOadd}. Notice again the shift in the subscript $\b$ by $1$, which shows that the 
addition formula does not apply to $P_n^\sO \big (\sw_{\f12, \g,\mu})$. 

Defining the kernel $L_n^\sE(\sw)$ that uses only $P_j^\sE(\sw)$ as in the case of hyperbolic surface, which
uses only polynomials that are even in the $t$ variable. Then both Assertions 1 and 2 are established in \cite{X23a}. 
If we decompose the the domain $\XX^{d+1}$ as an upper part and a lower part, 
$$
    \XX^{d+1}  = \XX_{+}^{d+1} \cup \XX_{-}^{d+1} =  \{(x,t) \in \XX^{d+1}: t \ge 0\} \cup
      \{(x,t) \in \XX^{d+1}: t \le 0\},  
$$ 
then $(\XX_{+}^{d+1}, \sw, \sd_{\XX})$ is a localizable space of homogeneous type if $\sw$ is even in the 
$t$ variable. 

\subsection{Other double domains of conic and hyperbolic type}
Orthogonal structure on several solid domains, beyond the domains bounded by quadratic surfaces, is preserved 
under an appropriate structure transformation on the hyperboloid $\XX^{d+1}$, as shown in \cite{X24}. 
This includes the addition formula and highly localized kernels so that we obtain localized space of homogeneous 
type on such domains. We state one example to give a flavor.

For $0<  \mathfrak{b} < 1$, we consider the domain inside the unit ball $\BB^{d+1}$ but outside the ellipsoid 
$\{(x,t): \mathfrak{b}(1-\|x\|^2) \le t^2\} \subset \BB^{d+1}$, which is defined by 
$$
      \mathbb{X}^{d+1} = \left\{(x,t):  \sqrt{1- \tfrac{t^2}{\mathfrak{b}}}\le  \|x\| \le  \sqrt{1- t^2}, \, \, 
          |t| \le 1 \right\}. 
$$
We consider the space $(\XX^{d+1},\sw_{\b,\g,\mu}, \sd_{\XX})$, where the weigh function is defined by 
\begin{align*}
 \sw_{\b,\g}(x,t)  = \big(1-\|x\|^2 - t^2 \big)^\beta  
        \big(t^2- \mathfrak{b} (1-\|x\|^2)\big)^\gamma \big(t^2-\mathfrak{b} + \|x\|^2\big)^{\frac12}
\end{align*}
and the distance function $\sd_XX$ is defined by 
\begin{align*}
 \sd_{\XX}((x,t), (y,s)) = \arccos & \left (\la x,y\ra +\sqrt{1-\|x\|^2}\sqrt{1-\|y\|^2} \right. \\
         & \qquad \left.  +\sqrt{\frac{1-t^2 - \|x\|^2}{1- \mathfrak{b}}}\sqrt{\frac{1-s^2 - \|y\|^2}{1- \mathfrak{b}}} \right).
%         & = \sd_{\XX_0} \left( \Big(x, \sqrt{t^2-\varrho^2}\Big), \Big(y,\sqrt{s^2 - \varrho^2}\Big)\right). \notag
\end{align*}
Then the corresponding kernel $L_n^\sE(\sw_{\b,\g})$ is highly localized with respect to the distance $\sd_\XX$. 
Consequently, $(\XX_+^{d+1},\sw, \sd_{\XX})$ is a localizable space of homogeneous type if $\sw$ is even in the 
$t$ variable.

\end{document}